\title{On divergence of expectations of the Feynman-Kac type with 
singular potentials}
\author{Yuu Hariya\footnote{Mathematical Institute, 
Tohoku University, Aoba-ku, Sendai 980-8578, Japan. }
\thanks{Corresponding author. E-mail: hariya@math.tohoku.ac.jp}
\and Kaname Hasegawa}
\date{\empty}
\numberwithin{equation}{section}
\theoremstyle{plain}
\newtheorem{thm}{Theorem}[section]
\newtheorem{prop}{Proposition}[section]
\newtheorem{lem}{Lemma}[section]
\theoremstyle{definition}
\theoremstyle{remark}
\newtheorem{rem}{Remark}[section]
\begin{document}

\def\N {\mathbb{N}}
\def\R {\mathbb{R}}
\def\Q {\mathbb{Q}}

\def\calF {\mathcal{F}}

\def\kp {\kappa}

\def\ind {\boldsymbol{1}}

\def\al {\alpha }
\def\la {\lambda }

\def\ga {\gamma }
\def\be {\beta }

\def\br {B}

\def\W {W}

\def\pr {P}
\def\pbes {P^{(3)}}

\def\ex {E}
\def\E {\mathbb{E}}

\newcommand{\vp}{\varphi}
\newcommand{\ep}{\epsilon}

\newcommand\ND{\newcommand}
\newcommand\RD{\renewcommand}

\ND\lref[1]{Lemma~\ref{#1}}
\ND\tref[1]{Theorem~\ref{#1}}
\ND\pref[1]{Proposition~\ref{#1}}
\ND\sref[1]{Section~\ref{#1}}
\ND\ssref[1]{Subsection~\ref{#1}}
\ND\aref[1]{Appendix~\ref{#1}}
\ND\rref[1]{Remark~\ref{#1}} 
\ND\cref[1]{Corollary~\ref{#1}}
\ND\eref[1]{Example~\ref{#1}}
\ND\fref[1]{Fig.\ {#1} }
\ND\lsref[1]{Lemmas~\ref{#1}}
\ND\tsref[1]{Theorems~\ref{#1}}
\ND\dref[1]{Definition~\ref{#1}}
\ND\psref[1]{Propositions~\ref{#1}}

\ND\sub[1]{T^{\alpha}_{#1}}
\ND\ovl[1]{\overline{#1}}

\ND\qp[1]{Q_{#1}\otimes P}

\ND\btm{\beta ^{(m)}}

\ND\Xm{X^{(m)}}

\def\br {B}
\def\bd {\br '}
\def\bn {\br ^{N}}
\def\ln {L^{N}}
\def\B {{\bf B}}
\def\E {{\bf E}}
\def\P {{\bf P}}
\def\sgn {\mathrm{sgn}}
\def\O {X}
\def\I {I^{1}}
\def\II {I}

\def\vi {\nu}
\def\A {a}

\newcommand{\Cst}{C}

\def\thefootnote{{}}

\maketitle 

\begin{abstract}
Motivated by the work of Baras-Goldstein (1984), we discuss when expectations 
of the Feynman-Kac type with singular potentials are divergent. 
Underlying processes are Brownian motion and $\alpha$-stable process. 
In connection with the work of Ishige-Ishiwata (2012) concerned with 
the heat equation in the half-space with a singular potential on the boundary, 
we also discuss the same problem in the half-space for the case of 
Brownian motion. 
\footnote{{\itshape Running head}.~Divergence of expectations of the Feynman-Kac type}
\footnote{{\itshape Key Words and Phrases}.~Feynman-Kac formula; heat equation; singular potential; fractional Laplacian.}
\footnote{2010 {\itshape Mathematical Subject Classification}.~Primary 60J65, 60G52; Secondary 35K05, 60J55.}
\end{abstract}

\section{Introduction}\label{;intro}

For $N\ge 3$, let $V$ be a nonnegative measurable function on $\R ^{N}$  
and consider the following heat equation: 
\begin{equation}\label{;hq0}
\left\{
\begin{array}{ll}
\displaystyle{\frac{\partial}{\partial t}} u=\dfrac{1}{2}\Delta u+Vu & 
\mbox{in}\quad (0,\infty )\times \R ^N,\vspace{7pt}\\
u(0,x)=u_0(x)\ge(\not\equiv)\, 0 & \mbox{in}\quad \R ^{N}. 
\end{array}
\right.
\end{equation}
We assume $u_{0}\in C_{0}(\R ^{N})$ for simplicity. In 
\cite{bg}, Baras and Goldstein derived a sufficient condition on the potential 
function $V$ for the nonexistence of solutions to the initial value problem 
\eqref{;hq0} by using the Feynman-Kac formula. 
In the sequel we let $\vi $ be a nonnegative measurable function on $(0,\infty )$ 
that is nonincreasing near the origin. 

\begin{thm}[\cite{bg}, Theorem~6.1]\label{;t0}
Suppose that $\vi $ satisfies 
\begin{align}\label{;cond0}
  \liminf _{r\to 0+}r^2\vi (r)&>\frac{\pi ^2}{8}N^{2} 
\end{align}
and that $V$ satisfies $V(x)\ge \vi (|x|)$ for a.e.\ $x\in \R ^{N}$. 
Then for any initial datum $u_{0}$, the equation \eqref{;hq0} does not have a 
solution. 
\end{thm}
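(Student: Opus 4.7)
The plan is to reduce the nonexistence claim to the divergence of a Feynman-Kac expectation, and then to produce the divergence by confining Brownian motion to a small cube on which $V$ is very large. For $V_n:=V\wedge n$, the equation with potential $V_n$ has the unique classical solution
\[
u_n(t,x)=\ex_x\Bigl[\exp\Bigl(\int_0^t V_n(\br_s)\,ds\Bigr)u_0(\br_t)\Bigr].
\]
By the parabolic comparison principle, any nonnegative solution $u$ of \eqref{;hq0} satisfies $u\ge u_n$ for every $n$; letting $n\to\infty$ and invoking monotone convergence gives
\[
u(t,x)\ge \ex_x\Bigl[\exp\Bigl(\int_0^t V(\br_s)\,ds\Bigr)u_0(\br_t)\Bigr].
\]
It therefore suffices to exhibit $(t,x)$ for which the Feynman-Kac expectation is $+\infty$; the natural choice is $x=0$, where $V$ is most singular.

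Since $u_0$ is a nontrivial nonnegative continuous function, the free heat semigroup sends it to a strictly positive function, so there is $c_0>0$ with $\ex_y[u_0(\br_{t/2})]\ge c_0$ for all $|y|\le 1$. Applying the Markov property at $t/2$ and restricting to paths that stay in the cube $Q_R:=(-R,R)^N\subset B(0,1)$ up to time $t/2$ (with $R$ small enough that $\vi$ is nonincreasing on $(0,R\sqrt{N}\,]$), one obtains
\[
\ex_0\Bigl[\exp\Bigl(\int_0^t V(\br_s)\,ds\Bigr)u_0(\br_t)\Bigr]\ge c_0\,e^{(t/2)\vi(R\sqrt{N})}\,\pr_0(\tau_{Q_R}>t/2),
\]
where $\tau_{Q_R}$ is the first exit time from $Q_R$ and the lower bound uses $|\br_s|\le R\sqrt{N}$ on the confining event.

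The survival probability is estimated from below by the ground-state term in the Dirichlet eigen-expansion of $\tfrac12\Delta$ on the cube: the first eigenfunction is the product of cosines $\prod_{i=1}^N\cos(\pi x_i/(2R))$ with eigenvalue $N\pi^2/(8R^2)$, so for small $R$ (equivalently, $t/R^2$ large) an explicit calculation yields
\[
\pr_0(\tau_{Q_R}>t/2)\ge C_N\exp\!\bigl(-N\pi^2 t/(16R^2)\bigr)
\]
for some dimensional constant $C_N>0$. Combining the two estimates,
\[
u(t,0)\ge c_0C_N\exp\!\Bigl(\tfrac{t}{2}\bigl[\vi(R\sqrt{N})-\tfrac{N\pi^2}{8R^2}\bigr]\Bigr).
\]
Hypothesis \eqref{;cond0} gives $R^2\vi(R\sqrt{N})\to c/N$ with $c>\pi^2N^2/8$, so the bracketed quantity is bounded below by $\delta/R^2$ for some $\delta>0$ and all sufficiently small $R$. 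Sending $R\to 0$ forces $u(t,0)=+\infty$, contradicting the assumed finiteness.

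The decisive geometric choice is the cube: its Dirichlet spectrum factorises as a sum of one-dimensional eigenvalues, giving a first eigenvalue of precisely $N\pi^2/(8R^2)$, while the inclusion $Q_R\subset B(0,R\sqrt{N})$ contributes a factor of $N$ when translating the hypothesis $r^2\vi(r)>\pi^2N^2/8$ into a lower bound for $\vi(R\sqrt{N})$. Their product produces exactly the threshold appearing in \eqref{;cond0}. The main technical point to verify carefully is the Feynman-Kac lower bound for an arbitrary candidate solution --- handled by the truncation/comparison step --- while the remainder is an elementary spectral calculation.
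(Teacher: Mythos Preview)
Your argument is essentially the original Baras--Goldstein proof: confine Brownian motion to a small cube, use the explicit product eigenfunction to estimate the survival probability, and balance against the size of the potential. The arithmetic is right---the cube eigenvalue $N\pi^2/(8R^2)$ combined with the inclusion $Q_R\subset B(0,R\sqrt N)$ (which costs a factor of $N$ when converting the hypothesis on $\vi $) reproduces exactly the threshold $\pi^2 N^2/8$ in \eqref{;cond0}. The paper does not prove this theorem separately; it proves the stronger \tref{;tmain1} by confining to a \emph{ball} of radius $1/n$ rather than a cube, and replacing the elementary product-cosine estimate by \lref{;keylem}, the eigenvalue expansion for the exit time of the Bessel process. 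This is precisely how the constant improves from $\pi^2 N^2/8$ to $\tfrac{1}{2}j_{(N-2)/2,1}^2$: the ball has a smaller first Dirichlet eigenvalue than the circumscribed cube, and no factor of $N$ is lost in passing between the confining region and the radial bound on $\vi $.

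There is one genuine gap. The paper's meaning of ``no solution'' is \eqref{;diver}: the Feynman--Kac expectation diverges for \emph{every} $x\in\R^N$ and $t>0$. You establish divergence only at $x=0$. Your comparison-principle reduction would rule out a classical solution from a single divergent point, but it does not yield \eqref{;diver} as stated. The fix is the three-interval split the paper uses: on $[0,\A t]$ the path travels from an arbitrary $x$ into the small ball (or cube) at a positive probabilistic cost independent of $n$; on $[\A t,(1-\A)t]$ one confines; on $[(1-\A)t,t]$ the path reaches the set where $u_0$ is bounded below. Your two-piece split collapses the first two roles and therefore only handles starting points already at the singularity.
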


The precise meaning of the equation \eqref{;hq0} not having a solution 
will be recalled in \sref{;simpr}; in view of the Feynman-Kac formula, 
it may be regarded as the divergence of the 
expectation 
\begin{align}\label{;fkbr}
 \ex _{x}\left[ 
 u_{0}(B_{t})\exp \left( 
 \int _{0}^{t}V(B_{s})\,ds
 \right) 
 \right] 
\end{align}
for any $x\in \R ^{N}$ and $t>0$, where 
$(\{ B_{t}\} _{t\ge 0}, \{ P_{x}\} _{x\in \R ^{N}})$ is an 
$N$-dimensional Brownian motion and $\ex _{x}$ denotes the 
expectation with respect to the probability measure $P_{x}$. 

One of the objectives of the paper is to show that the condition 
\eqref{;cond0} can be relaxed as 
\begin{align}\label{;cond1}
 \liminf _{r\to 0+}r^2\vi (r)&>\frac{1}{2}j_{\frac{N-2}{2},1}^2. 
\end{align}
See \tref{;tmain1} below. Here and in the sequel, 
we denote by $j_{\mu ,1}$ the first positive zero of the Bessel 
function $J_{\mu }$ of the first kind with index $\mu $ for 
$\mu >-1$. 
Baras and Goldstein proved \tref{;t0} probabilistically, while in \cite[Theorem~2.2]{bg} 
they showed, employing an analytic approach not dependent on the Feynman-Kac 
formula, 
that in the 
case $V(x)=c/|x|^{2}$ with $c$ a positive constant, 
the number $C_{N}=\frac{1}{2}\left( \frac{N-2}{2}\right) ^{2}$ 
is the threshold for the existence and nonexistence 
of solutions to the problem; that is, for any initial datum $u_{0}\in C_{0}(\R ^{N})$, 
the equation \eqref{;hq0} has a solution if $c\le C_{N}$ and has 
no solution otherwise. Since $j_{\mu ,1}/\mu \to 1$ 
as $\mu \to \infty $, 
our condition \eqref{;cond1} is asymptotically optimal 
with respect to the dimension $N$, in the sense that 
as $N\to \infty $, 
\begin{align*}
 \frac{1}{2}j_{\frac{N-2}{2} ,1}^{2}
 \times \frac{1}{C_{N}}\to 1. 
\end{align*} 
The critical value $C_{N}$ also appears as the best constant of 
Hardy's inequality in $\R ^{N}$ 
as will be remarked in \sref{;simpr}. 
We derive the condition \eqref{;cond1} by adopting the same reasoning as in 
the proof of \tref{;t0} by Baras-Goldstein, with improvement and 
simplification of estimates given there. 
The following lemma is a key ingredient in the derivation: 
\begin{lem}\label{;keylem}
It holds that for all $T>0$,  
 \begin{align*}
  \int _{\{ \xi \in \R ^{N};\,|\xi |<1\} }
 P_{\xi }\left( 
 \max _{0\le s\le T}|B_{s}|<1
 \right) d\xi 
 \ge \frac{2\varpi _{N}}{j_{\frac{N-2}{2},1}^2}
 \exp \left( 
 -\frac{1}{2}j_{\frac{N-2}{2},1}^2T
 \right) , 
 \end{align*}
 where $\varpi _{N}=\frac{2\pi ^{N/2}}{\Gamma (N/2)}$ is the surface area of the $(N-1)$-dimensional unit sphere. This estimate is 
 also valid when $N=1,2$. 
\end{lem}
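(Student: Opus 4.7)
The plan is to realize the left-hand side as a quantity controlled by the Dirichlet heat semigroup on the open unit ball $B_{1}\subset \R ^{N}$, and then to extract a lower bound from the spectral expansion by retaining only the contribution of the principal eigenmode.

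Let $\tau :=\inf \{t\ge 0:\,|B_{t}|\ge 1\}$ and denote by $p_{T}^{D}(\xi ,y)$ the transition density of Brownian motion killed upon exiting $B_{1}$. Since $P_{\xi }(\max _{0\le s\le T}|B_{s}|<1)=P_{\xi }(\tau >T)=\int _{B_{1}}p_{T}^{D}(\xi ,y)\,dy$, Fubini gives
\[
\int _{B_{1}}P_{\xi }\!\left( \max _{0\le s\le T}|B_{s}|<1\right) d\xi
=\int _{B_{1}}\!\!\int _{B_{1}}p_{T}^{D}(\xi ,y)\,d\xi \,dy.
\]
The operator $-\tfrac{1}{2}\Delta $ on $L^{2}(B_{1})$ with Dirichlet boundary condition admits a complete orthonormal basis of eigenfunctions $\{\phi _{n}\}$ with eigenvalues $0<\lambda _{1}<\lambda _{2}\le \cdots $, and the corresponding heat kernel expands as $p_{T}^{D}(\xi ,y)=\sum _{n}e^{-\lambda _{n}T}\phi _{n}(\xi )\phi _{n}(y)$. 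Substituting and integrating termwise yields
\[
\int _{B_{1}}\!\!\int _{B_{1}}p_{T}^{D}(\xi ,y)\,d\xi \,dy
=\sum _{n\ge 1}e^{-\lambda _{n}T}\Bigl( \int _{B_{1}}\phi _{n}\Bigr) ^{2}
\ge e^{-\lambda _{1}T}\Bigl( \int _{B_{1}}\phi _{1}\Bigr) ^{2},
\]
since every term in the sum is nonnegative.

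It remains to identify $\lambda _{1}$ and $\bigl( \int _{B_{1}}\phi _{1}\bigr) ^{2}$. Separation of variables in polar coordinates shows that the radial principal eigenfunction has the form
\[
\phi _{1}(x)=c\,|x|^{-(N-2)/2}J_{(N-2)/2}(j\,|x|),\qquad \lambda _{1}=\tfrac{1}{2}\,j^{2},
\]
where I abbreviate $j:=j_{(N-2)/2,1}$ and $\mu :=(N-2)/2$. Using the classical Bessel identities
\[
\int _{0}^{1}r^{\mu +1}J_{\mu }(jr)\,dr=\frac{J_{\mu +1}(j)}{j},\qquad
\int _{0}^{1}rJ_{\mu }(jr)^{2}\,dr=\frac{J_{\mu +1}(j)^{2}}{2}
\]
(the second being valid because $J_{\mu }(j)=0$), short polar-coordinate computations give
\[
\int _{B_{1}}\phi _{1}=\frac{c\,\varpi _{N}\,J_{\mu +1}(j)}{j},\qquad
1=\|\phi _{1}\|_{L^{2}}^{2}=\frac{c^{2}\varpi _{N}\,J_{\mu +1}(j)^{2}}{2}.
\]
Dividing these two identities cancels $c$ and $J_{\mu +1}(j)$ cleanly, producing $\bigl( \int _{B_{1}}\phi _{1}\bigr) ^{2}=2\varpi _{N}/j^{2}$, which combined with the lower bound above proves the claim.

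The dimension restriction plays no role in this argument: the Bessel eigenfunction formula and the identities above are valid for all indices $\mu >-1$, so $N=1,2$ are handled uniformly with $N\ge 3$. The main obstacle I anticipate is purely bookkeeping: tracking the normalization constant $c$ through the polar integrals and invoking the right Bessel identities so that the advertised factor $2\varpi _{N}/j_{(N-2)/2,1}^{2}$ emerges from the cancellation.
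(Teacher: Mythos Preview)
Your argument is correct and is in the same spirit as the paper's, but the packaging differs in a way worth noting. Both proofs rest on the eigenvalue expansion for Brownian motion killed at the boundary of the unit ball and on the Bessel identity $\int_{0}^{1}r^{\mu+1}J_{\mu}(jr)\,dr=J_{\mu+1}(j)/j$. The paper, however, starts from Kent's explicit series for $P_{\xi}(\max_{s\le T}|B_{s}|<1)$, integrates it termwise over $|\xi|<1$, and arrives at the exact identity
\[
\int_{|\xi|<1}P_{\xi}\Bigl(\max_{0\le s\le T}|B_{s}|<1\Bigr)\,d\xi
=2\varpi_{N}\sum_{k\ge 1}\frac{1}{j_{\mu,k}^{2}}\exp\Bigl(-\tfrac{1}{2}j_{\mu,k}^{2}T\Bigr),
\]
then drops all but the $k=1$ term. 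To justify the termwise integration it proves a separate auxiliary lemma on the asymptotics $\sqrt{\pi j_{\mu,k}/2}\,|J_{\mu+1}(j_{\mu,k})|\to 1$ as $k\to\infty$, and it treats $N=1$ (i.e.\ $\mu=-1/2$) by a direct cosine-series computation. Your route via the $L^{2}$-orthonormal eigenbasis of the Dirichlet Laplacian is somewhat lighter: the termwise integration of $\sum_{n}e^{-\lambda_{n}T}\phi_{n}(\xi)\phi_{n}(y)$ is immediate from Mercer-type uniform convergence (or simply from $\sum_{n}e^{-\lambda_{n}T}<\infty$ and Parseval), no asymptotics for $J_{\mu+1}(j_{\mu,k})$ are needed, and the argument runs uniformly over all $N\ge 1$. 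The tradeoff is that the paper's computation delivers the full series identity, while yours proceeds directly to the lower bound.
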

This lemma is proved by using eigenvalue expansions given in \cite{ken}  
for hitting distributions of Bessel processes. Note that the constant 
$\frac{1}{2}j_{\frac{N-2}{2},1}^2$ is equal to the smallest eigenvalue of minus 
one half the Dirichlet Laplacian in the unit ball in $\R ^{N}$. 

Another objective of the paper is, with replacing 
$(1/2)\Delta $ in the equation \eqref{;hq0} by the 
fractional Laplacian $-(-\Delta )^{\al /2}$ for 
$0<\al <2$, to give a sufficient condition on 
$V$ for the nonexistence of solutions to the equation. 
To be more precise, we replace in the expectation \eqref{;fkbr} the Brownian motion 
$(\{ B_{t}\} _{t\ge 0}, \{ P_{x}\} _{x\in \R ^{N}})$ by an $N$-dimensional 
rotationally invariant $\al $-stable process, where we allow the 
dimension 
$N$ to be less than $3$, and of concern is the transient case $N>\al $; we prove 
that the expectation diverges for any 
$x\in \R ^{N}$ and $t>0$ if 
\begin{align}\label{;cond2}
 \liminf _{r\to 0+}r^{\al }\vi (r)>j_{\frac{N-2}{2},1}^{\al }
\end{align}
and $V(x)\ge \vi (|x|)$ for a.e.\ $x\in \R ^{N}$. See \tref{;tmain2}.  
The proof is based on 
the representation of $\al $-stable process as a subordinated Brownian 
motion and \lref{;keylem} stated above. 
Similarly to the case of Brownian motion (i.e., the case $\al =2$), the 
constant $j_{\frac{N-2}{2},1}^{\al }$ in \eqref{;cond2} asymptotically 
coincides with the best constant of the Hardy-type inequality 
for the fractional Laplacian as will be seen in \sref{;sfrac}. 

Let $N\ge 3$ as in the case of Brownian motion. 
In \cite{ii}, Ishige and Ishiwata 
studied the existence and nonexistence of solutions to 
the heat equation in the half-space $\R ^{N}_{+}=\R ^{N-1}\times (0,\infty )$ with a 
singular potential on the boundary. 
In connection with their work, we are also concerned with expectations of the 
type 
\begin{align}\label{;fktr}
 \E _{x}\left[ 
 u_{0}(\bd _{t},|\bn _{t}|)\exp 
 \left\{ 
 \int _{0}^{t}V(\bd _{s},0)\,d\ln _{s}
 \right\} 
 \right] 
\end{align}
for $x=(x',x_{N})\in \R ^{N}_{+}$ and $t>0$, where 
under the probability measure $\P _{x}$, $\{ \bd _{t}\} _{t\ge 0}$ 
is an $(N-1)$-dimensional Brownian motion starting from $x'$, 
$\{ \bn _{t}\} _{t\ge 0}$ is a one-dimensional Brownian motion starting from 
$x_{N}$ and independent of $\bd $, 
and $\{ \ln _{t}\} _{t\ge 0}$ is the local time process of 
$\bn $ at the origin; $V$ is a measurable function on the boundary 
of $\R ^{N}_{+}$  
and we assume that $u_{0}$ is in $C_{0}(\R ^{N}_{+})$, nonnegative and not identically 
equal to 0. 
We show in \tref{;tmain3} that if 
\begin{align}\label{;cond3}
 \liminf _{r\to 0+}r\vi (r)>j_{\frac{N-3}{2},1}
\end{align}
and $V(x',0)\ge \vi (|x'|)$ for a.e.\ $x'\in \R ^{N-1}$, then the 
expectation \eqref{;fktr} diverges for any $x\in \R ^{N}_{+}$ and $t>0$. 
We also discuss a connection of the condition 
\eqref{;cond3} with the best constant of Kato's inequality in 
$\R ^{N}_{+}$. 

This paper is organized as follows: 
In \sref{;simpr}, we prove \tref{;tmain1} which asserts that \tref{;t0} holds true 
with the condition \eqref{;cond0} replaced by \eqref{;cond1}. 
In \sref{;sfrac}, we deal with the case of fractional Laplacians 
and see how the condition \eqref{;cond2} is derived in the proof of \tref{;tmain2}.  
\sref{;sbdry} concerns expectations of the form \eqref{;fktr}, which  
are seen in \tref{;tmain3} to be divergent if the condition \eqref{;cond3} 
is fulfilled. Those three \tsref{;tmain1}, \ref{;tmain2} and 
\ref{;tmain3} are proved in a unified manner 
by using \lref{;keylem}. The proof of \lref{;keylem} is given 
in the appendix, where we also discuss a connection of the expression \eqref{;fktr} 
with relativistic $1$-stable process in terms of the Laplace transform. 
\smallskip 

Throughout the paper, for every positive integer $d\in \N $ and every $t>0$, 
we denote 
by $g_{d}(t,\cdot )$ the Gaussian kernel on $\R ^{d}$: 
\begin{align*}
 g_{d}(t,x)
 :=\frac{1}{\sqrt{(2\pi t)^{d}}}
 \exp \left( 
 -\frac{|x|^{2}}{2t}
 \right) ,\quad x\in \R ^{d}. 
\end{align*}
For given two sequences $\{ a_{n}\} $, 
$\{ b_{n}\} $ of real numbers with $a_{n}\neq 0$ 
for all $n$, we write 
\begin{align*}
 a_{n}\sim b_{n} \quad \text{as }n\to \infty 
\end{align*}
to mean that $\lim \limits_{n\to \infty }b_{n}/a_{n}=1$. 
The symbol $\vi $ denotes a nonnegative 
measurable function on $(0,\infty )$ that is nonincreasing near the origin 
as mentioned above. 
Other notation will be introduced as needed.

\section{Improvement of the condition \eqref{;cond0}}\label{;simpr}

In this section we let $N\ge 3$ and $V$ a measurable function on $\R ^{N}$. 
The purpose of this section is to give a proof of 

\begin{thm}\label{;tmain1}
Suppose that $\vi $ satisfies \eqref{;cond1} and that 
$V(x)\ge \vi (|x|)$ for a.e.\ $x\in \R ^{N}$. Then 
the equation \eqref{;hq0} does not have a solution for any initial datum 
$u_{0}\in C_{0}(\R ^{N})$. 
\end{thm}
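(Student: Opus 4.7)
My plan is to follow the Baras-Goldstein strategy that proves \tref{;t0}, but with \lref{;keylem} replacing the cruder staying-probability estimate that produced the constant $\pi^{2}N^{2}/8$. By hypothesis \eqref{;cond1} I fix a constant $\lambda$ with $\tfrac{1}{2}j_{\frac{N-2}{2},1}^{2}<\lambda<\liminf_{r\to 0+}r^{2}\vi(r)$ and pick $\delta>0$ so that $V(x)\ge \lambda/|x|^{2}$ for a.e.\ $x$ with $|x|\le\delta$. The notion of ``no solution'' to be recalled in this section amounts, via the Feynman-Kac formula, to divergence of $\ex_{x}[u_{0}(\br_{t})\exp(\int_{0}^{t}V(\br_{s})\,ds)]$ in an appropriate sense, so my task reduces to producing enough divergence of this expectation to preclude existence.

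The core estimate is a path-confinement bound. For $0<R\le\delta$ and $x\in B(0,R)$ I restrict the expectation to the event $\{\max_{0\le s\le t}|\br_{s}|<R\}$; on this event $V(\br_{s})\ge \lambda/R^{2}$, so the Feynman-Kac weight is at least $\exp(\lambda t/R^{2})$. Brownian scaling gives $\pr_{x}(\max_{s\le t}|\br_{s}|<R)=\pr_{x/R}(\max_{s\le t/R^{2}}|\br_{s}|<1)$, and integrating over $x\in B(0,R)$, substituting $x=R\xi$, and applying \lref{;keylem} with $T=t/R^{2}$ yields
\begin{align*}
\int_{|x|<R}\ex_{x}\!\left[\exp\!\left(\int_{0}^{t}V(\br_{s})\,ds\right)\ind_{\{\max_{s\le t}|\br_{s}|<R\}}\right]dx
\ge\frac{2\varpi_{N}R^{N}}{j_{\frac{N-2}{2},1}^{2}}\exp\!\left(\bigl(\lambda-\tfrac{1}{2}j_{\frac{N-2}{2},1}^{2}\bigr)\tfrac{t}{R^{2}}\right).
\end{align*}
Because the coefficient of $t/R^{2}$ in the exponent is strictly positive, the right-hand side blows up as $R\to 0+$, so the Feynman-Kac expectation cannot be locally integrable on any neighborhood of the origin.

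From this I would deduce non-existence by the standard truncation argument: set $V_{n}:=V\wedge n$, solve the truncated equation via Feynman-Kac to obtain $u_{n}(t,x)=\ex_{x}[u_{0}(\br_{t})\exp(\int_{0}^{t}V_{n}(\br_{s})\,ds)]$, let $n\to\infty$ monotonically, and note that the local non-integrability above is incompatible with the limit defining a legitimate solution to \eqref{;hq0}. To propagate divergence to arbitrary $(x,t)$, I would use the Markov property together with the strict positivity of the Brownian transition density; the $u_{0}$ factor is absorbed, after a further Markov decomposition, by restricting to an open set on which $u_{0}\ge\alpha>0$ (available since $u_{0}\in C_{0}(\R^{N})$ is continuous and not identically zero).

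The essentially new input is \lref{;keylem}; once that sharp lower bound is in hand, the scaling identity and integration over $B(0,R)$ combine almost mechanically, and the threshold $\tfrac{1}{2}j_{\frac{N-2}{2},1}^{2}$ appears exactly as the smallest Dirichlet eigenvalue of $-\tfrac{1}{2}\Delta$ on the unit ball. The step I expect to be most delicate is the final bookkeeping: matching the divergence produced above against the precise notion of ``no solution'' recalled in \sref{;simpr}, so that the truncation and the Markov propagation slot into that framework exactly.
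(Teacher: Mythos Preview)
Your approach is essentially the paper's: the same key input (\lref{;keylem}), the same Brownian scaling, and ultimately the same three-piece Markov decomposition, just assembled in a different order---the paper fixes $(t,x)$ from the start and packages everything into the single event $A_{n}=\{\max_{at\le s\le(1-a)t}|B_{s}|<1/n,\ B_{t}\in D\}$ together with \pref{;pI1}, whereas you first write the integrated confinement bound and defer the Markov bookkeeping. Note that the paper's notion of ``no solution'' is precisely the pointwise divergence of \eqref{;fkbr} for every $(t,x)$, so your detour through local non-integrability and the anticipated extra truncation reasoning are unnecessary: once the Markov steps you describe are inserted, your displayed lower bound already yields $\ex_{x}[\,\cdots]=\infty$ for each fixed $x$ directly.
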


For each $m\in \N $, we set 
$V_{m}(x)=\min \{ m,V(x)\} ,\,x\in \R ^{N}$. Then the equation 
\eqref{;hq0} with $V$ replaced by $V_{m}$ has a unique solution 
$u_{m}$, and by the Feynman-Kac formula, it admits the representation 
\begin{align}\label{;fkbrn}
 u_{m}(t,x)=
 \ex _{x}
 \left[ 
 u_{0}(B_{t})\exp 
 \left( 
 \int _{0}^{t}V_{m}(B_{s})\,ds
 \right) 
 \right] , \quad t>0,\, x\in \R ^{N}. 
\end{align}
Here $\{ B_{t}\} _{t\ge 0}$ is an $N$-dimensional Brownian motion 
starting from $x$ under the probability measure $P_{x}$. Following 
Baras-Goldstein \cite{bg}, we say that the equation \eqref{;hq0} does not have a 
solution if 
\begin{align}\label{;diver}
 \lim _{m\to \infty }u_{m}(t,x)=\infty 
\end{align}
for all $t>0$ and $x\in \R ^{N}$. Note that by the representation \eqref{;fkbrn} 
and the monotone convergence theorem, \eqref{;diver} is 
restated as the divergence of the expectation \eqref{;fkbr}, 
to which we are going to give a proof from now on. 
Fix $t>0$ and $x\in \R ^{N}$ arbitrarily. 
Since we assume that $u_{0}$ is continuous and $u_{0}\ge (\not\equiv )\,0$, 
there exist $\ep _{0}>0$ and a nonempty open disc $D\subset \R ^{N}$ such that 
\begin{align}\label{;bdb}
 u_{0}(y)\ge \ep _{0}\quad \text{for all }y\in D. 
\end{align}
We fix $\A \in (0,1/2)$. 
Following the proof of \tref{;t0} by \cite{bg}, we set an event 
$A_{n}$ for each $n\in \N $ by 
\begin{align*}
 A_{n}=
 \left\{ 
 \max _{\A t\le s\le (1-\A )t}\left| B_{s}\right| <1/n, \ 
 B_{t}\in D
 \right\} . 
\end{align*}
We take $n_{0}\in \N $ so that $\vi $ is nonincreasing on $(0,1/n_{0}]$. 
Then for $n\ge n_{0}$, by restricting the $\pr _{x}$-expectation 
in \eqref{;fkbr} to $A_{n}$ and using \eqref{;bdb}, we see that 
\eqref{;fkbr} is bounded from below by 
\begin{align}
 &\ep _{0}\ex _{x}\left[ 
 \exp \left\{ 
 \int _{\A t}^{(1-\A )t}V(B_{s})\,ds
 \right\} ;\,A_{n}
 \right] \notag \\
 &\ge \ep _{0}\exp \left\{ 
 \vi \left( \frac{1}{n}\right) \ga t
 \right\} \pr _{x}(A_{n}), \label{;1e1}
\end{align}
where we set $\ga =1-2\A $. For $\pr _{x}(A_{n})$, we have the 
following estimate: set $\mu =(N-2)/2$. 

\begin{prop}\label{;pI1}
 There exists a positive constant 
 $\Cst \equiv \Cst (x,t,\A ,D,N)$ independent of $n$ such that 
 \begin{align*}
  \pr _{x}(A_{n})\ge \Cst \left( \frac{1}{n}\right) ^{N}
  \exp\left( 
  -\frac{1}{2}j_{\mu ,1}^2n^2\ga t
  \right) \quad \text{for all }n\in \N . 
 \end{align*}
 This estimate also holds true in the case $N=1,2$. 
\end{prop}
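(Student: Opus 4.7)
The plan is to decompose the event $A_n$ into three pieces using the Markov property applied at the times $\alpha t$ and $(1-\alpha )t$. The middle piece --- Brownian motion confined to the ball of radius $1/n$ over a time interval of length $\gamma t$ --- will, after a Brownian scaling, be estimated by \lref{;keylem}. The initial and final pieces are elementary Gaussian computations that contribute only $n$-independent constants.

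Applying the Markov property first at $(1-\alpha )t$ and then at $\alpha t$, I would write
\begin{align*}
 P_x(A_n)=\int _{\R ^N} g_N(\alpha t,y-x)\,E_y\!\left[\ind _{\{\max _{0\le s\le \gamma t}|B_s|<1/n\}}\,P_{B_{\gamma t}}(B_{\alpha t}\in D)\right]dy .
\end{align*}
On the event inside the inner expectation, $|B_{\gamma t}|<1/n$, so $P_{B_{\gamma t}}(B_{\alpha t}\in D)$ is bounded below by $\inf _{|z|<1/n}P_z(B_{\alpha t}\in D)$. For $n\ge 1$ this infimum is in turn bounded below by $\inf _{|z|\le 1}P_z(B_{\alpha t}\in D)$, which is a positive constant $c_2=c_2(D,\alpha ,t,N)$ since the Gaussian density is continuous and strictly positive and $D$ is a nonempty open disc. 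In the same spirit, for $|y|<1/n\le 1$ one has $|y-x|\le |x|+1$, so $g_N(\alpha t,y-x)\ge c_1=c_1(x,\alpha ,t,N)>0$ uniformly in $n\ge 1$.

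Factoring out $c_1c_2$ and performing the change of variables $y=\xi /n$ together with the Brownian scaling $\tilde{B}_u=nB_{u/n^2}$ converts the remaining integral into
\begin{align*}
 n^{-N}\int _{\{|\xi |<1\}}P_\xi \!\left(\max _{0\le u\le n^2\gamma t}|B_u|<1\right)d\xi .
\end{align*}
Applying \lref{;keylem} with $T=n^2\gamma t$ produces the factor $(2\varpi _N/j_{\mu ,1}^2)\exp (-\tfrac 12 j_{\mu ,1}^2 n^2\gamma t)$, so setting $C=2c_1c_2\varpi _N/j_{\mu ,1}^2$ yields the asserted bound.

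There is no genuine obstacle once \lref{;keylem} is in hand; the only point requiring care is to choose the lower bounds $c_1,c_2$ \emph{uniformly} in $n$, which is why I use only the crude estimate $|y-x|\le |x|+1$ on the relevant range rather than a finer one. The whole argument is dimension-blind, so the cases $N=1,2$ are covered by the same calculation once the corresponding version of \lref{;keylem} is invoked.
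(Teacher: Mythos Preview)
Your proof is correct and follows essentially the same route as the paper's: both apply the Markov property at $\alpha t$ and $(1-\alpha)t$, bound the initial Gaussian factor and the terminal hitting probability by positive $n$-independent constants via the crude inclusion $\{|y|<1/n\}\subset\{|y|\le 1\}$, and then reduce the middle confinement probability by the scaling $y=\xi/n$ to an application of \lref{;keylem} with $T=n^2\gamma t$. The only cosmetic difference is the order in which the constants are introduced.
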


Once this proposition is shown, the proof of \tref{;tmain1} is 
immediate: 

\begin{proof}[Proof of \tref{;tmain1}]
By \eqref{;1e1} and \pref{;pI1}, the expectation \eqref{;fkbr} 
is bounded from below by 
\begin{align*}
 \ep _{0}\Cst \left( \frac{1}{n}\right) ^{N}
 \exp \left\{ 
 \left( 
 \vi \left( \frac{1}{n}\right) 
 -\frac{1}{2}j_{\mu ,1}^{2}n^2
 \right) \ga t
 \right\} , 
\end{align*}
which tends to infinity as $n\to \infty $ under the condition 
\eqref{;cond1}. Therefore the assertion is proved. 
\end{proof}

It remains to prove \pref{;pI1}. 
\begin{proof}[Proof of \pref{;pI1}]
By the Markov property of Brownian motion, we have 
\begin{align*}
 \pr _{x}(A_{n})=\ex_{x}\left[ 
 \vp \left( B_{\A t}\right) ;\,|B_{\A t}|<1/n
 \right] , 
\end{align*}
where we set 
\begin{align*}
 \vp (y)=\pr _{y}\left( 
 \max _{0\le s\le \gamma t}|B_{s}|<1/n,\ B_{(1-\A )t}\in D
 \right) , \quad y\in \R ^{N}. 
\end{align*}
Using the Markov property again, we further have for all $y\in \R ^{N}$, 
\begin{align*}
 \vp (y)&=\ex _{y}\left[ 
 P_{B_{\gamma t}}
 \left( 
 B_{\A t}\in D\right) ;\,\max \limits_{0\le s\le \gamma t}|B_{s}|<1/n
 \right] \\
 &\ge \inf _{|z|\le 1/n}P_{z}\left( B_{\A t}\in D\right) 
 \times P_{y}\left( 
 \max _{0\le s\le \gamma t}|B_{s}|<1/n
 \right) \\
 &\ge c_{1}P_{y}\left( 
 \max _{0\le s\le \gamma t}|B_{s}|<1/n
 \right) , 
\end{align*}
where 
$
 c_{1}:=\inf \limits_{|z|\le 1}
 P_{z}\left( 
 B_{\A t}\in D
 \right) 
$, 
which is positive since 
$\R ^{N}\ni z\mapsto P_{z}\left( B_{\A t}\in D\right) $ is 
continuous. Therefore we have the estimate 
\begin{align*}
 \pr _{x}(A_{n})&\ge c_{1}\ex _{x}\left[ 
 P_{B_{\A t}}
 \left( 
 \max _{0\le s\le \gamma t}|B_{s}|<1/n
 \right) ;\,|B_{\A t}|<1/n
 \right] \notag \\
 &=c_{1}\int _{|y|<1/n}
 dy\,g_{N}(\A t,y-x)
 P_{y}\left( 
 \max _{0\le s\le \gamma t}|B_{s}|<1/n
 \right) \notag \\
 &=c_{1}\left( \frac{1}{n}\right) ^{N}
 \int _{|\xi |<1}d\xi \,g_{N}(\A t,\xi /n-x)
 P_{\xi /n}\left( 
 \max _{0\le s\le \gamma t}|B_{s}|<1/n
 \right) \notag \\
 &\ge c_{1}c_{2}\left( \frac{1}{n}\right) ^{N}
 \int _{|\xi |<1}d\xi \,
 P_{\xi }\left( 
 \max _{0\le s\le n^2\gamma t}|B_{s}|<1
 \right) 
\end{align*}
with 
$
 c_{2}:=
 \inf \limits_{|\xi |\le 1}g_{N}(\A t,\xi -x)
>0
$ 
in the last line, where we also used the scaling property of 
Brownian motion.  
The proposition follows by taking $T=n^2\ga t$ in 
\lref{;keylem}. 
\end{proof}

We end this section with a remark on \tref{;tmain1}. 

\begin{rem}
\thetag{1}~For every real $\delta \ge 2$ and 
$r>0$, we denote by 
$\bigl( \{ R_{t}\} _{t\ge 0},P^{(\delta )}_{r}\bigr) $ 
a $\delta $-dimensional Bessel process starting 
from $r$. It is known \cite{yor} that Bessel processes 
enjoy the following absolute continuity relationship: 
for every $t>0$ and every nonnegative measurable 
functional $F$ on the space $C([0,t];\R )$ of real-valued 
continuous paths over $[0,t]$,  
\begin{align*}
  E^{(\delta )}_{r}\left[ 
  F(R_{s},s\le t)
  \right] 
  =E^{(2)}_{r}\left[ 
  F(R_{s},s\le t)\left( \frac{R_{t}}{r}\right) ^{\mu }
  \exp \left( -\frac{1}{2}\,\mu ^2\int _{0}^{t}
  \frac{ds}{R_{s}^2}\right) 
  \right] , 
 \end{align*}
where $\mu =\delta /2-1$. 
Take $\delta =N$ with $N\ge 3$. In the expression \eqref{;fkbr}, 
suppose that $u_{0}$ is rotationally invariant, namely 
$u_{0}(x)=f(|x|)$ for all $x\in \R ^{N}$ for some 
nonnegative function $f$ on $(0,\infty )$, and that $V$ is of the form 
$V(x)=c/|x|^2$ with $c$ a positive constant. Then by the above 
relationship, \eqref{;fkbr} is written as 
\begin{align}\label{;fkbes}
 &\ex _{x}\left[ 
 f\left( |B_{t}|\right)
 \exp 
 \left( 
 c\int _{0}^{t}
 \frac{ds}{|B_{s}|^2}
  \right)  
 \right] \notag \\
 &=E^{(2)}_{|x|}
 \left[ 
 f(R_{t})
 \left( 
 \frac{R_{t}}{|x|}
 \right) ^{\frac{N}{2}-1}
 \exp \left\{ 
 (c-C_{N})\int _{0}^{t}\frac{ds}{R_{s}^{2}}
 \right\} 
 \right] 
\end{align}
when $x\neq 0$. Here 
$C_{N}=\frac{1}{2}\left( \frac{N-2}{2}\right) ^{2}$ as introduced 
in \sref{;intro}. 
It is clear that if $c\le C_{N}$ and $f$ is compactly supported, then 
\eqref{;fkbes} is finite; moreover, by the fact that 
\begin{align}\label{;divtrans}
 E^{(2)}_{|x|}\left[ 
 \frac{1}{R_{s}^{2}}\,\bigg| \,R_{t}=y
 \right] =\infty \quad \text{for a.e.\ }y>0 
\end{align}
for any $0<s<t$, the 
expectation \eqref{;fkbes} is divergent as long as 
$\left| \{ f>0\} \right| >0$ in the case $c>C_{N}$.  
This observation agrees with \cite[Theorem~2.2]{bg}. 
The fact \eqref{;divtrans} is easily deduced from 
the explicit representation for the transition density functions of 
Bessel process (see, e.g., \cite[Chapter~XI]{rey}). 
See also \rref{;rtmain2}\,\thetag{2} in the next section. 

\noindent 
\thetag{2}~
Also explicitly known is the following joint distribution 
\cite[p.\,386, Formula~1.20.8]{bs}: 
\begin{align}\label{;expljoint}
 &P^{(\delta )}_{r}\left( 
 \int _{0}^{t}\frac{ds}{R_{s}^2}\in dz,\,R_{t}\in d\xi 
 \right) \notag \\
 &=\frac{1}{t}\left( \frac{\xi }{r}\right) ^{\mu }\xi 
 \exp \left( 
 -\frac{1}{2}\mu ^2z-\frac{r^2+\xi ^2}{2t}
 \right) \theta _{r\xi /t}(z)\,dzd\xi ,\quad z,\xi >0, &
\end{align}
for any $r>0$ and $t>0$, where for every $\rho >0$, $\theta _{\rho }$ 
is a constant multiple of the density function of the Hartman-Watson 
distribution on $(0,\infty )$, whose integral representation is given in \cite{yor}: 
\begin{align*}
 &\theta _{\rho }(z)=\frac{\rho }{\sqrt{2\pi ^3z}}
 \int _{0}^\infty dy\,\exp \left( 
 \frac{\pi ^2-y^2}{2z}
 \right) \exp \left( 
 -\rho \cosh y
 \right) \sinh y\sin \left( \frac{\pi y}{z}\right) ,\quad z>0. 
\end{align*}
By this expression, we have in particular 
\begin{align*}
 \lim _{z\to \infty }\sqrt{2\pi z^{3}}
 \theta _{\rho }(z)&=\rho \int _{0}^{\infty }dy\,y\exp \left( 
 -\rho \cosh y
 \right) \sinh y\\
 &=\int _{0}^{\infty }dy\,\exp \left( 
 -\rho \cosh y
 \right) \\
 &=K_{0}(\rho ), 
\end{align*}
where $K_{0}$ is the modified Bessel function of the third kind 
(Macdonald function) with index $0$. From this asymptotics 
and \eqref{;expljoint}, we see that 
for every $x\in \R ^{N}$\,($x\neq 0$) and $t>0$, 
\begin{align}\label{;convdiv1}
  \ex _{x}\left[ 
  \exp \left( c\int _{0}^{t}\frac{ds}{|B_{s}|^{2}}\right) 
  \right] 
  \begin{cases}
   <\infty & \text{if }c\le C_{N}, \\
   =\infty & \text{if }c>C_{N}, 
  \end{cases}
\end{align}
which is consistent with the observation in \thetag{1}. 
We remark that since by the scaling property, 
\begin{align*}
 \ex _{0}\left[ 
 \int _{0}^{t}\frac{ds}{|B_{s}|^{2}}
 \right] &=\int _{0}^{t}\frac{ds}{s}\times 
 \ex _{0}\left[ 
 \frac{1}{|B_{1}|^{2}}
 \right] \\
 &=\infty  
\end{align*}
for any $t>0$, we cannot draw a sufficient condition on $c$ 
for the finiteness of expectations in \eqref{;convdiv1} from 
Khas'minskii's well-known lemma (see, e.g., \cite[Lemma~3.7]{cz}). 

\noindent 
\thetag{3}~The constant $C_{N}$ coincides with the best constant 
of Hardy's inequality: 
\begin{align*}
 C_{N}\int _{\R ^{N}}
 \frac{|\phi (x)|^2}{|x|^{2}}\,dx
 \le \int _{\R ^{N}}\phi (x)
 \left( -\frac{1}{2}\Delta \phi (x)\right) dx, 
 \quad \phi \in C_{0}^{\infty }(\R ^{N}).  
\end{align*}
The factor $1/2$ in the right-hand side is put in accordance with 
\eqref{;hq0}. \tref{;tmain1} indicates that 
$
\frac{1}{2}j_{\frac{N-2}{2},1}^{2}\ge C_{N}
$; in fact, the following upper and lower estimates are known 
\cite{cham,lor} as to $j_{\mu ,1}$ for $\mu >-1$: 
\begin{align}\label{;j1asym}
 \sqrt{(\mu +1)(\mu +5)}\le 
 j_{\mu ,1}\le \sqrt{\mu +1}\left( \sqrt{\mu +2}+1\right). 
\end{align}
For more precise bounds, see, e.g., \cite{qw} 
(see also \cite[Chapter~5]{leb} for detailed descriptions of 
Bessel functions). These estimates reveal that 
the constant $\frac{1}{2}j_{\frac{N-2}{2},1}^{2}$ is asymptotically optimal 
in the sense that 
\begin{align*}
 \frac{1}{2}j_{\frac{N-2}{2},1}^{2}\sim C_{N} 
 \quad \text{as }N\to \infty . 
\end{align*}
\end{rem}

\section{The case of fractional Laplacians}\label{;sfrac}

In this section the dimension $N$ is allowed to be less than $3$.  
Fix $0<\al <2$. For each $x\in \R ^{N}$, we denote by 
$(\{ X_{t}\} _{t\ge 0},\pr _{x})$ an $N$-dimensional rotationally invariant 
$\al $-stable process starting from $x$, that is, under the 
probability measure $\pr _{x}$, the process $X_{t}-x$,\,$t\ge 0$, 
is a L\'evy process whose characteristic function is given by 
\begin{align*}
 \ex _{x}\left[ 
 \exp \left\{ 
 i\xi \cdot (X_{t}-x)
 \right\} 
 \right] =e^{-t|\xi |^{\al }}, 
 \quad t\ge 0,\,\xi \in \R ^{N}; 
\end{align*}
recall that the process $(\{ X_{t}\} _{t\ge 0},\{ \pr _{x}\} _{x\in \R ^{N}})$ 
is a right-continuous Markov process with infinitesimal generator 
$-(-\Delta )^{\al /2}$. Throughout the section, unless otherwise stated, we 
assume $N>\al $, i.e., 
we deal with the transient case (see \rref{;rtmain2}\,\thetag{2} as to this 
condition on $N$).  
The same as in the previous section, we let $V$ be a measurable function 
on $\R ^{N}$ and assume that $u_{0}\in C_{0}(\R ^{N})$ is nonnegative and 
not identically equal to $0$. The purpose of this section is to prove 

\begin{thm}\label{;tmain2}
Suppose that $\vi $ 
satisfies the condition \eqref{;cond2} and that 
$V(x)\ge \vi (|x|)$ for a.e.\ $x\in \R ^{N}$. Then 
\begin{align}\label{;divfr}
 \ex _{x}\left[ 
 u_{0}(X_{t})\exp 
 \left( 
 \int _{0}^{t}V(X_{s})\,ds
 \right) 
 \right] =\infty 
\end{align}
for any $x\in \R ^{N}$ and $t>0$. 
\end{thm}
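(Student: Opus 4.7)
The plan is to adapt the argument of \sref{;simpr} via subordination. As in the proof of \tref{;tmain1}, pick $\ep _{0}>0$ and a nonempty open disc $D\subset \R ^{N}$ with $u_{0}\ge \ep _{0}$ on $D$, fix $\A \in (0,1/2)$, set $\ga =1-2\A $, and for $n\in \N $ large enough that $\vp $ is nonincreasing on $(0,1/n]$ define
\[
A_{n}=\Bigl\{ \max _{\A t\le s\le (1-\A )t}|X_{s}|<1/n,\ X_{t}\in D\Bigr\} .
\]
Since $V\ge 0$ and $V(y)\ge \vp (|y|)$ a.e., restricting the $\pr _{x}$-expectation in \eqref{;divfr} to $A_{n}$ yields the lower bound $\ep _{0}\exp \bigl( \vp (1/n)\ga t\bigr) \pr _{x}(A_{n})$. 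With $\mu =(N-2)/2$, it is thus enough to show
\begin{align*}
\pr _{x}(A_{n})\ge C\,n^{-N}\exp \bigl( -j_{\mu ,1}^{\al }n^{\al }\ga t\bigr)
\end{align*}
for some positive constant $C$ independent of $n$; the desired divergence then follows from \eqref{;cond2} exactly as in the proof of \tref{;tmain1}.

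To estimate $\pr _{x}(A_{n})$, I would realize $X$ as a subordinated Brownian motion: enlarge the probability space so that under $\pr _{x}$, $X_{s}=B_{\sigma _{s}}$, where $\{ B_{u}\} _{u\ge 0}$ is a standard $N$-dimensional Brownian motion starting from $x$ and $\{ \sigma _{s}\} _{s\ge 0}$ is an independent $\al /2$-stable subordinator with Laplace exponent $\psi (\la )=(2\la )^{\al /2}$. Since $B$ has continuous paths,
\[
A_{n}\supseteq \Bigl\{ \max _{\sigma _{\A t}\le u\le \sigma _{(1-\A )t}}|B_{u}|<1/n,\ B_{\sigma _{t}}\in D\Bigr\} .
\]
Conditioning on $\sigma $, applying the Markov property of $B$ at the times $\sigma _{\A t}$ and $\sigma _{(1-\A )t}$, and using the uniform lower bounds (for $n\ge 1$) $\inf _{|y|\le 1}g_{N}(\sigma _{\A t},y-x)>0$ and $\inf _{|z|\le 1}P_{z}(B_{\sigma _{t}-\sigma _{(1-\A )t}}\in D)>0$ exactly as in the proof of \pref{;pI1}, followed by Brownian scaling and \lref{;keylem} applied with $T=n^{2}(\sigma _{(1-\A )t}-\sigma _{\A t})$, one obtains
\begin{align*}
\pr _{x}(A_{n}\mid \sigma )
\ge C_{1}\bigl( \sigma _{\A t},\,\sigma _{t}-\sigma _{(1-\A )t}\bigr) \,n^{-N}\,\frac{2\varpi _{N}}{j_{\mu ,1}^{2}}\exp \Bigl( -\tfrac{1}{2}j_{\mu ,1}^{2}n^{2}\bigl( \sigma _{(1-\A )t}-\sigma _{\A t}\bigr) \Bigr) ,
\end{align*}
where $C_{1}(u,v)>0$ for all $u,v>0$.

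Taking the expectation and using the independence and stationarity of the increments of $\sigma $, the key computation is
\begin{align*}
\ex _{x}\Bigl[ \exp \bigl( -\tfrac{1}{2}j_{\mu ,1}^{2}n^{2}(\sigma _{(1-\A )t}-\sigma _{\A t})\bigr) \Bigr]
=\exp \bigl( -\ga t\,\psi (j_{\mu ,1}^{2}n^{2}/2)\bigr)
=\exp \bigl( -j_{\mu ,1}^{\al }n^{\al }\ga t\bigr) ,
\end{align*}
which shows how the Laplace exponent of the stable subordinator converts the smallest Dirichlet eigenvalue $\tfrac{1}{2}j_{\mu ,1}^{2}$ for the Brownian unit ball into its fractional-Laplacian counterpart $j_{\mu ,1}^{\al }$, producing the constant in \eqref{;cond2}. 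Combined with the a.s.\ positivity of the increments $\sigma _{\A t}$ and $\sigma _{t}-\sigma _{(1-\A )t}$ (which makes $\ex _{x}[C_{1}(\cdot ,\cdot )]>0$ by independence), this yields the required lower bound on $\pr _{x}(A_{n})$. The main obstacle is orchestrating the conditional-on-$\sigma $ estimate so that the exponent takes the multiplicative form $n^{2}\times (\text{subordinator increment})$; once this is arranged, the Laplace exponent $(2\la )^{\al /2}$ of the $\al /2$-stable subordinator automatically delivers the sharp constant $j_{\mu ,1}^{\al }$.
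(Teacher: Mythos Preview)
Your argument is correct and follows essentially the same route as the paper: the same event $A_{n}$, the subordination representation, \lref{;keylem} applied to the Brownian piece, and the Laplace transform $E[e^{-\lambda \sigma _{\gamma t}}]=e^{-\gamma t(2\lambda )^{\alpha /2}}$ converting $\tfrac{1}{2}j_{\mu ,1}^{2}n^{2}$ into $j_{\mu ,1}^{\alpha }n^{\alpha }$. The only organizational difference is that the paper first applies the Markov property of $X$ itself---getting deterministic constants $c_{1},c_{2}$ by restricting the subordinator to $[1,2]$---whereas you condition on the whole subordinator path and carry a random factor $C_{1}(\sigma _{\A t},\sigma _{t}-\sigma _{(1-\A )t})$ that factors out by independence of increments; both are valid.
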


To prove the theorem, we first recall that the $\al $-stable process 
$X$ is identical in law with a subordinated Brownian motion. Let 
$\{ \sub{t}\} _{t\ge 0}$ be an $\al /2$-stable subordinator under 
a probability measure $P$, that is, $\sub{}$ is a nondecreasing 
L\'evy process characterized by 
\begin{align}\label{;subord}
 E\left[ 
 e^{-\la \sub{t}}
 \right] =e^{-t\la ^{\al /2}} \quad \text{for all }
 \la ,t\ge 0. 
\end{align}
Let $\{ W(t)\} _{t\ge 0}$ be an $N$-dimensional standard Brownian motion 
under $P$, independent of $\sub{}$. Then it is known that 
the following identity in law holds: 
\begin{align}\label{;idenlaw}
 \left( \{ X_{t}\} _{t\ge 0}, \pr _{x}\right) 
 \stackrel{(d)}{=}
 \left( \left\{ 
 x+W(2\sub{t})
 \right\} _{t\ge 0},P\right) ; 
\end{align}
for subordinators and stable processes, see \cite[Chapter~1]{ap}. 
Using this identity and \lref{;keylem}, we prove \tref{;tmain2}. 
As in the previous section, we fix $\A \in (0,1/2)$ and set 
$\ga =1-2\A $; we also let a positive $\ep _{0}$ and a nonempty open disc 
$D\subset \R ^{N}$ be such that $u_{0}$ fulfills \eqref{;bdb}. 

\begin{proof}[Proof of \tref{;tmain2}]
 For each $n\in \N $, set 
 \begin{align*}
  A_{n}=\left\{ 
  \max _{\A t\le s\le (1-\A )t}|X_{s}|<1/n,\,
  X_{t}\in D
  \right\} . 
 \end{align*}
 Then by arguing in the same way as in the proof of \tref{;tmain1}, 
 the left-hand side of \eqref{;divfr} 
 is bounded from below by 
 \begin{align}\label{;2e1}
  \ep _{0}\exp \left\{ 
  \vi \left( 
  \frac{1}{n}
  \right) \ga t
  \right\} \pr _{x}(A_{n}) 
 \end{align}
 for every sufficiently large $n$. 
 By the Markov property of $\al $-stable process, 
 \begin{align}
  \pr _{x}(A_{n})&=\ex _{x}\left[ 
  \pr _{X_{(1-\A )t}}\left( X_{\A t}\in D\right) ;\,
  \max _{\A t\le s\le (1-\A )t}|X_{s}|<1/n
  \right] \notag \\
  &\ge c_{1}\pr _{x}\left( 
  \max _{\A t\le s\le (1-\A )t}|X_{s}|<1/n
  \right), \label{;2e2}
 \end{align}
 where $c_{1}:=\inf \limits_{|z|\le 1}\pr _{z}\left( 
 X_{\A t}\in D
 \right) $, which is positive since by \eqref{;idenlaw}, 
 \begin{align*}
  c_{1}&=\inf _{|z|\le 1}\int _{0}^{\infty }
  P(\sub{\A t}\in ds)
  P\left( 
  z+W(2s)\in D
  \right) \\
  &\ge c_{1}'\times P(1\le \sub{\A t}\le 2)\times 
  |D|
 \end{align*}
 with 
 \begin{align*}
  c_{1}':=
  \inf \left\{ 
  g_{N}(2s,y-z)
  ;\,
  1\le s\le 2,\,y\in \overline{D},\,|z|\le 1
  \right\} >0. 
 \end{align*}
 By  the Markov property and \eqref{;idenlaw}, 
 the probability in the right-hand side of \eqref{;2e2} 
 is written as 
 \begin{align*}
  &\ex _{x}\left[ 
  \pr _{X_{\A t}}\left( 
  \max _{0\le s\le \ga t}|X_{s}|<1/n
  \right) ;\,|X_{\A t}|<1/n
  \right] \\
  &=\int _{0}^{\infty }
  \pr (\sub{\A t}\in ds)
  \int _{|y|<1/n}
  dy\,g_{N}(2s,y-x)
  \pr _{y}\left( 
  \max _{0\le s\le \ga t}|X_{s}|<1/n
  \right) . 
 \end{align*}
 Therefore setting a positive constant $c_{2}$ by 
 \begin{align*}
  c_{2}=
  P(1\le \sub{\A t}\le 2)\times 
  \inf \left\{ 
   g_{N}(2s,y-x);\,1\le s\le 2,\,|y|\le 1
  \right\} , 
 \end{align*}
 we see from \eqref{;2e2} that 
 \begin{align}\label{;2e3}
  \pr _{x}(A_{n})\ge c_{1}c_{2}
  \int _{|y|<1/n}dy\,
  \pr _{y}\left( 
  \max _{0\le s\le \ga t}|X_{s}|<1/n
  \right) . 
 \end{align}
 By \eqref{;idenlaw}, the integrand in the right-hand side of \eqref{;2e3} 
 is rewritten and estimated as 
 \begin{align*}
  &P \left( 
  \max _{0\le s\le \ga t}\left| 
  y+W(2\sub{s})
  \right| <1/n
  \right) \\
  &\ge P\left( 
  \max _{0\le s\le 2\sub{\ga t}}\left| 
  y+W(s)
  \right| <1/n
  \right) \\
  &=\int _{0}^{\infty }
  P(\sub{\ga t}\in d\tau )
  P\left( 
  \max _{0\le s\le 2\tau }|y+W(s)|<1/n
  \right) , 
 \end{align*}
 where the inequality is due to the fact that $\sub{}$ may have a jump. 
 Plugging this estimate into \eqref{;2e3}, we have by Fubini's theorem 
 and the scaling property of Brownian motion, 
 \begin{align}
  \pr _{x}(A_{n})&\ge 
  c_{1}c_{2}\left( \frac{1}{n}\right) ^{N}
  \int _{0}^{\infty }P(\sub{\ga t}\in d\tau )
  \int _{|\xi |<1}d\xi\,
  P\left( 
  \max _{0\le s\le 2n^2\tau }|\xi +W(s)|<1
  \right) \notag \\
  &\ge c_{1}c_{2}\left( \frac{1}{n}\right) ^{N}
  \times \frac{2\varpi _{N}}{j_{\frac{N-2}{2},1}^{2}}
  \int _{0}^{\infty }P(\sub{\ga t}\in d\tau )
  \exp \left( 
  -j_{\frac{N-2}{2},1}^{2}n^2\tau 
  \right) \notag \\
  &=\frac{2\varpi _{N}}{j_{\frac{N-2}{2},1}^{2}}c_{1}c_{2}
  \left( 
  \frac{1}{n}
  \right) ^{N}
  \exp \left( 
  -j_{\frac{N-2}{2},1}^{\al }n^{\al }\ga t
  \right) , \label{;2e4}
 \end{align}
 where we used \lref{;keylem} with $T=2n^2\tau $ 
 for the second line and \eqref{;subord} for the third. 
 By \eqref{;2e4}, we see that \eqref{;2e1} diverges 
 as $n\to \infty $ under the condition \eqref{;cond2}, 
 which ends the proof. 
\end{proof}

We conclude this section with a remark on \tref{;tmain2}. 
\begin{rem}\label{;rtmain2}
\thetag{1}~We recall the Hardy-type inequality for the fractional 
Laplacian $-(-\Delta )^{\al /2}$ in $\R ^{N}$ with $N>\al $: 
\begin{align*}
 C_{N,\al }\int _{\R ^{N}}
 \frac{|\phi (x)|^{2}}{|x|^{2}}\,dx
 \le \int _{\R ^{N}}\phi (x)\left( 
 (-\Delta )^{\al /2}\phi (x)
 \right) dx, \quad \phi \in C_{0}^{\infty }(\R ^{N}), 
\end{align*}
where 
\begin{align}\label{;opfrac}
 C_{N,\al }:=2^{\al }
 \frac{\Gamma ^{2}\left( \frac{N+\al }{4}\right) }
 {\Gamma ^{2}\left( \frac{N-\al }{4}\right) }
\end{align}
with $\Gamma $ denoting the gamma function, is the best constant; see, e.g., \cite{her,fls}. 
The constant $j_{\frac{N-2}{2},1}^{\al }$ in the condition 
\eqref{;cond2} asymptotically recovers this optimal $C_{N,\al }$: 
\begin{align*}
 j_{\frac{N-2}{2},1}^{\al }
 \sim C_{N,\al } \quad \text{as }N\to \infty . 
\end{align*}
Indeed, the estimates \eqref{;j1asym} on $j_{\mu ,1}$ shows the asymptotics 
\begin{align*}
 j_{\frac{N-2}{2},1}^{\al }\sim \left( \frac{N}{2}\right) ^{\al }, 
\end{align*}
which $C_{N,\al }$ admits as well by Stirling's formula. 
In view of \eqref{;convdiv1}, it is plausible that 
for every $x\in \R ^{N}$\,($x\neq 0$) and $t>0$, 
\begin{align*}
  \ex _{x}\left[ 
  \exp \left( c\int _{0}^{t}\frac{ds}{|X_{s}|^{\al }}\right) 
  \right] 
  \begin{cases}
   <\infty & \text{if }c\le C_{N,\al }, \\
   =\infty & \text{if }c>C_{N,\al }. 
  \end{cases}
\end{align*}

\noindent 
\thetag{2}~In the case $N\le \al $ it holds that for any $\ep >0$, 
\begin{align}\label{;condfr}
 \ex _{x}
 \left[ 
 \frac{1}{|X_{s}|^{\al }}\ind _{\{ |X_{s}|<\ep \} }\Big| \,
 X_{t}=y
 \right] =\infty \quad \text{for a.e.\ }y\in \R ^{N}
\end{align}
for every $0<s<t$. Indeed, by denoting the transition density function of 
$X$ by 
$p^{\al }_{t}(x,y),\,t>0,x,y\in \R ^{N}$, the left-hand side of \eqref{;condfr}  is 
written, for a.e.\ $y$, as 
\begin{align*}
 \int _{|z|<\ep }\frac{dz}{|z|^{\al }}
 \frac{p^{\al }_{s}(x,z)p^{\al }_{t-s}(z,y)}{p^{\al }_{t}(x,y)}, 
\end{align*}
which is rewritten, by changing to polar coordinates, as 
\begin{align*}
 \int _{(0,\ep )}dr\,r^{N-\al -1}
 \int _{\mathbb{S}^{N-1}}\sigma (dw)\,
 \frac{p^{\al }_{s}(x,rw)p^{\al }_{t-s}(rw,y)}{p^{\al }_{t}(x,y)}
\end{align*}
with $\mathbb{S}^{N-1}$ and $\sigma $ being the $(N-1)$-dimensional 
unit sphere and the surface element on $\mathbb{S}^{N-1}$, respectively. 
By this expression, 
we have \eqref{;condfr} if $N-\al -1\le -1$, i.e., $N\le \al $. 
\end{rem}

\section{Heat equation with a singular potential on the boundary}\label{;sbdry}

In this section we let $N\ge 3$. 
We denote by 
$(\{ \B _t\} _{t\ge 0}, \{ \P _{x}\} _{x\in \R ^N})$ an 
$N$-dimensional Brownian motion and 
by $\E _{x}$ the expectation relative to the probability measure 
$\P _{x}$. Set $\R ^{N}_{+}=\R ^{N-1}\times (0,\infty )$. For 
$x=(x',x_{N})\in \R ^{N}_{+}$, we write 
$\B _{t}=(\bd _t,\bn _t),\,t\ge 0$, where under $\P _{x}$, 
$\bd $ is the $(N-1)$-dimensional Brownian motion starting from 
$x'\in \R ^{N-1}$ that consists of the first $(N-1)$ coordinates of $\B $, and 
$\bn $ is the one-dimensional Brownian motion starting from 
$x_{N}>0$, given as the $N$th coordinate of $\B $. Note that two 
processes 
$\bd $ and $\bn $ are independent. We denote by 
$\{ \ln _{t}\} _{t\ge 0}$ the local time process of $\bn $ 
at the origin, which is given through Tanaka's formula: 
\begin{align}\label{;it}
\left| 
\bn _{t}
\right| 
=x_{N}+\int _{0}^{t}\sgn \bn _{s}\,d\bn _{s}+\ln _{t}, \quad 
t\ge 0\quad \P _{x}\text{-a.s., }
\end{align}
where $\mathrm{sgn}\,a$ denotes the signature of $a\in \R $. 
Let $V$ be a measurable function on 
$\partial \R ^{N}_{+}=\R ^{N-1}\times \{ 0\} $. 
The purpose of this section is to prove the following theorem: 
\begin{thm}\label{;tmain3}
 Let $u_{0}\in C_{0}(\R ^{N}_{+})$ be nonnegative and not identically 
 equal to $0$. 
 Suppose that $\vi $ satisfies the condition \eqref{;cond3} and that 
 $V(x',0)\ge \vi (|x'|)$ for a.e.\ $x'\in \R ^{N-1}$. Then 
 the expectation \eqref{;fktr} diverges for any $x\in \R ^{N}_{+}$ and $t>0$. 
\end{thm}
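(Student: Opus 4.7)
The plan is to imitate the strategy of \tref{;tmain1} and \tref{;tmain2}, using the independence of $\bd $ and $\bn $ under $\P _{x}$ to split the expectation \eqref{;fktr} into a piece treatable by \pref{;pI1} applied in dimension $N-1$ and a piece governed by the law of the $1$-dimensional Brownian local time. As in those proofs, I would fix $\A \in (0,1/2)$, set $\ga =1-2\A $, and pick $\ep _{0}>0$ together with bounded open sets $D'\subset \R ^{N-1}$ and $J\subset (0,\infty )$ so that $u_{0}(y',y_{N})\ge \ep _{0}$ for all $(y',y_{N})\in D'\times J$. For $n$ so large that $\vi $ is nonincreasing on $(0,1/n]$, one has $V(\bd _{s},0)\ge \vi (1/n)$ on the event $\{ \max _{\A t\le s\le (1-\A )t}|\bd _{s}|<1/n\} $; discarding the contribution of the potential outside $[\A t,(1-\A )t]$ and invoking the independence of $\bd $ and $\bn $, the left-hand side of \eqref{;fktr} is bounded below by $\ep _{0}$ times
\begin{align*}
\P _{x}\!\left(
\max _{\A t\le s\le (1-\A )t}|\bd _{s}|<\frac{1}{n},\ \bd _{t}\in D'
\right)
\cdot
\E _{x}\!\left[
\exp \{ \vi (1/n)(\ln _{(1-\A )t}-\ln _{\A t})\} \ind _{\{ |\bn _{t}|\in J\} }
\right] .
\end{align*}

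The first factor, being a functional of $\bd $ alone, is exactly of the form estimated in \pref{;pI1}, now applied to the $(N-1)$-dimensional Brownian motion $\bd $, and so is bounded below by $C_{1}(1/n)^{N-1}\exp \!\left( -\tfrac{1}{2}j_{(N-3)/2,1}^{2}n^{2}\ga t\right) $. For the second factor, write $\la =\vi (1/n)$ and apply the Markov property of $\bn $ successively at times $(1-\A )t$ and $\A t$: the first step replaces $\ind _{\{ |\bn _{t}|\in J\} }$ by a continuous, strictly positive function $g(z)=P_{z}(|\bn _{\A t}|\in J)$ of $\bn _{(1-\A )t}$, and the second turns the local-time increment into the local time, over the interval $[0,\ga t]$, of a Brownian motion starting from $\bn _{\A t}$. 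After restricting $|\bn _{\A t}|$ and $|\bn _{(1-\A )t}|$ to a fixed compact set (which happens with positive $\P _{x}$-probability), I would use the strong Markov property of $\bn $ at the first hitting time $\tau _{0}=\inf \{ s:\bn _{s}=0\} $ to reduce matters to a lower bound on $E_{0}[\exp (\la \ln _{s})\ind _{\{ |\bn _{s}|\le M\} }]$ for $s$ close to $\ga t$ and some fixed $M$.

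This last quantity is controlled via the well-known explicit joint density of $(\bn _{s},\ln _{s})$ for $\bn $ started at $0$, namely
\begin{align*}
P_{0}(\bn _{s}\in dz,\ln _{s}\in d\ell )
=\frac{\ell +|z|}{\sqrt{2\pi s^{3}}}\exp \!\left( -\frac{(\ell +|z|)^{2}}{2s}\right) dz\,d\ell ,\qquad \ell >0,\,z\in \R ;
\end{align*}
a direct Laplace-type computation (changing to $u=\ell +|z|$ and optimising in $u$) yields $E_{0}[\exp (\la \ln _{s})\ind _{\{ |\bn _{s}|\le M\} }]\sim 2\,e^{\la ^{2}s/2}$ as $\la \to \infty $. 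Folding in the hitting-time probability and the other $O(1)$ constants, the second factor is at least $C_{2}\exp \{ \vi (1/n)^{2}(\ga t-\delta )/2\} $ with $\delta >0$ at my disposal, so that multiplying the two lower bounds makes the expression \eqref{;fktr} at least
\begin{align*}
C_{3}\,n^{-(N-1)}\exp \!\left\{ \tfrac{1}{2}\bigl[ \vi (1/n)^{2}(\ga t-\delta )-j_{(N-3)/2,1}^{2}n^{2}\ga t\bigr] \right\} .
\end{align*}
Condition \eqref{;cond3} furnishes a constant $c>j_{(N-3)/2,1}$ with $\vi (1/n)\ge cn$ for all large $n$; choosing $\delta $ small enough that $c^{2}(\ga t-\delta )>j_{(N-3)/2,1}^{2}\ga t$ makes the exponent grow like a positive multiple of $n^{2}$, which swallows the polynomial prefactor $n^{-(N-1)}$ and forces divergence as $n\to \infty $. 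The main technical obstacle is this Laplace asymptotic for the local-time factor: the prefactor $1/2$ and the full time horizon $\ga t$ must be recovered exactly, since only then does the excess $\vi (1/n)^{2}-j_{(N-3)/2,1}^{2}n^{2}$ provided by \eqref{;cond3} suffice to overcome the $(N-1)$-dimensional small-ball cost.
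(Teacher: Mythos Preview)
Your approach is correct and coincides with the paper's: the same independence factorisation into the $(N-1)$-dimensional piece and the local-time piece, the same application of \pref{;pI1} in dimension $N-1$ for the first factor, and the same explicit joint density \eqref{;jloc} for the second (the paper's \pref{;pI2}). The only difference is that the paper handles $\II _{n}$ slightly more directly, applying \eqref{;jloc} for a general starting point $|x|<1$ rather than first passing through $\tau _{0}$; this yields the full horizon $\ga t$ with only a harmless correction $e^{-2\vi (1/n)}$ and so removes the need for your auxiliary $\delta $, but either route closes the argument under \eqref{;cond3}.
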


\subsection{Feynman-Kac formula for a boundary value problem}
\label{;ss41} 

Before giving a proof of \tref{;tmain3}, we explain where expectations of the 
form \eqref{;fktr} arise from. We consider the following initial-boundary value problem 
for the heat equation in $\R ^{N}_{+}$: 
\begin{align}\label{;hq1}
\left\{
\begin{array}{ll}
\displaystyle{\frac{\partial}{\partial t}} u-\frac{1}{2}\Delta u=0 & 
\mbox{in}\quad (0,\infty )\times \R ^{N}_{+},\vspace{7pt}\\
\displaystyle{\frac{\partial}{\partial x_N}} u+Vu=0 & 
\mbox{on}\quad (0,\infty )\times \partial \R ^{N}_{+},\vspace{7pt}\\
u(0,x)=u_0(x)
& \mbox{in}\quad \R ^{N}_{+}. 
\end{array}
\right. 
\end{align}
In what follows we often write $u(t,x)=u(t,x',x_{N})$ for $x=(x',x_{N})\in \R ^{N}_{+}$. 

\begin{prop}\label{;pFK}
Assume that $V$ is bounded and that the continuous function 
$u:[0,\infty )\times \R _{+}^{N}\to [0,\infty )$ is of class 
$C^{1,2}$ on $(0,\infty )\times \R _{+}^{N}$ and satisfies 
\eqref{;hq1}. Moreover, assume that for each finite $T>0$, 
there exist constants $K>0$ and $0<\lambda <1/(2NT)$ such that 
\begin{align}\label{;aFK}
 \max _{0\le t\le T}
 u(t,x)
 \le 
 Ke^{\lambda |x|^2} \quad \text{for all }x\in \R _{+}^{N}. 
\end{align}
Then for every $t\ge 0$ and $x\in \R _{+}^{N}$, $u(t,x)$ admits the representation 
\eqref{;fktr} . 
\end{prop}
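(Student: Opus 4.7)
The plan is to apply It\^o's formula combined with Tanaka's formula \eqref{;it} to the process
\begin{align*}
 M_{s}:=u(t-s,\bd _{s},|\bn _{s}|)\exp \left\{
 \int _{0}^{s}V(\bd _{r},0)\,d\ln _{r}
 \right\} , \quad 0\le s\le t,
\end{align*}
and to show that $\{ M_{s}\} _{0\le s\le t}$ is a $\P _{x}$-martingale; identifying $\E _{x}[M_{0}]=u(t,x)$ with $\E _{x}[M_{t}]$, which equals \eqref{;fktr}, will yield the assertion.

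First I would verify the semimartingale decomposition of the integrand. By \eqref{;it}, $|\bn _{s}|$ is a continuous semimartingale with martingale part $\int _{0}^{s}\sgn \bn _{r}\,d\bn _{r}$, bounded variation part $\ln _{s}$, and quadratic variation $\langle |\bn |\rangle _{s}=s$. Since $u$ is assumed to be $C^{1,2}$ on $(0,\infty )\times \R _{+}^{N}$ and continuous up to the boundary, I would apply It\^o's formula to $u(t-s,\bd _{s},|\bn _{s}|)$; using that $\ln $ increases only on the set $\{ \bn =0\} $, this gives
\begin{align*}
 du(t-s,\bd _{s},|\bn _{s}|)&=\left( -\frac{\partial }{\partial t}u+\frac{1}{2}\Delta u\right) \!\bigg| _{(t-s,\bd _{s},|\bn _{s}|)}ds+dN_{s}\\
 &\quad +\frac{\partial u}{\partial x_{N}}(t-s,\bd _{s},0)\,d\ln _{s},
\end{align*}
where $N_{s}$ collects the stochastic integrals against $\bd $ and $\bn $. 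The first bracket vanishes by the heat equation in \eqref{;hq1}.

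Next I would handle the exponential factor. Setting $Z_{s}=\exp \{ \int _{0}^{s}V(\bd _{r},0)\,d\ln _{r}\} $, which is a bounded-variation process since $V$ is bounded, integration by parts yields
\begin{align*}
 dM_{s}=Z_{s}\,dN_{s}+Z_{s}\left[ \frac{\partial u}{\partial x_{N}}(t-s,\bd _{s},0)+V(\bd _{s},0)u(t-s,\bd _{s},0)\right] d\ln _{s}.
\end{align*}
The $d\ln _{s}$ term vanishes by the boundary condition in \eqref{;hq1}, so $M$ is a local martingale.

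The main work is to upgrade this to a true martingale, and this is where the growth hypothesis \eqref{;aFK} enters. Fixing $T>t$, I would dominate $|M_{s}|$ by $K\exp (\lambda \,|\B _{s}|^{2})\cdot \exp (\| V\| _{\infty }\ln _{t})$ for $s\le t$. The condition $\lambda <1/(2NT)$ guarantees $\E _{x}[\exp (\lambda \max _{0\le s\le T}|\B _{s}|^{2})]<\infty $ by standard Gaussian maximal estimates, while $\ln _{t}$ has Gaussian tails under $\P _{x}$ (its law is explicit via the reflection principle), so H\"older's inequality gives $\E _{x}[\sup _{0\le s\le t}|M_{s}|]<\infty $. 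This integrability lets me localize by stopping times $\tau _{n}\uparrow t$, use the optional stopping theorem on the stopped local martingale, and pass to the limit by dominated convergence to obtain $\E _{x}[M_{t}]=\E _{x}[M_{0}]=u(t,x)$, which is exactly the desired representation. The chief obstacle I anticipate is the martingale-versus-local-martingale justification, since the estimates on $\ln _{t}$ and on $|\B _{s}|^{2}$ must be balanced to fit inside the range $\lambda <1/(2NT)$ afforded by \eqref{;aFK}.
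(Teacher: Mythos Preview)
Your proposal is correct and follows essentially the same route as the paper's proof: apply It\^o--Tanaka to $M_{s}$, use the PDE and the boundary condition in \eqref{;hq1} to cancel the finite-variation terms, and then invoke the growth bound \eqref{;aFK} together with H\"older's inequality and the exponential integrability of $\ln _{t}$ to pass from local to true martingale. The paper carries out this last step via the spatial exit times $S_{n}=\inf\{s:|\B _{s}|\ge n\sqrt{N}\}$ and a splitting on $\{S_{n}>T\}$ versus $\{S_{n}\le T\}$ (monotone convergence on the first piece, H\"older plus a tail bound for $S_{n}$ on the second), whereas you dominate $\sup_{s\le t}|M_{s}|$ directly and use dominated convergence---a minor and equally valid variation.
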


\begin{proof}
Let $T>0$ be fixed and set 
 \begin{align*}
  M_{t}:=e^{A_{t}}u(T-t,\bd _{t},|\bn _{t}|), \quad 0\le t\le T, 
 \end{align*}
 where 
 \begin{align*}
  A_{t}:=\int _{0}^{t}V(\bd _{s},0)\,d\ln _{s}. 
 \end{align*}
 By It\^o's formula, it holds that $\P _{x}$-a.s., 
 \begin{align*}
  M_{t}=u(T,x)-&\int _{0}^{t}e^{A_{s}}\frac{\partial u}{\partial t}
  (T-s,\bd _{s},|\bn _{s}|)\,ds+\int _{0}^{t}e^{A_{s}}
  u(T-s,\bd _{s},|\bn _{s}|)\,dA_{s}\\
  +&\int _{0}^{t}e^{A_{s}}\nabla _{x'}u(T-s,\bd _{s},|\bn _{s}|)
  \cdot d\bd _{s}
  +\int _{0}^{t}e^{A_{s}}\frac{\partial u}{\partial x_{N}}
  (T-s,\bd _{s},|\bn _{s}|)\,d|\bn _{s}|\\
  +&\frac{1}{2}\int _{0}^{t}e^{A_{s}}\Delta u(T-s,\bd _{s},|\bn _{s}|)\,ds
 \end{align*}
for all $0\le t\le T$. As $u$ solves \eqref{;hq1}, the second and sixth 
terms on the right-hand side are cancelled. Moreover, by Tanaka's formula 
\eqref{;it} and by the boundary condition in \eqref{;hq1}, the sum of 
the third and fifth terms is equal to 
\begin{align*}
&\int _{0}^{t}e^{A_{s}}u(T-s,\bd _{s},0)V(\bd _{s},0)\,d\ln _{s}\\
&+\int _{0}^{t}e^{A_{s}}\frac{\partial u}{\partial x_{N}}
(T-s,\bd _{s},|\bn _{s}|)\sgn \bn _{s}\,d\bn _{s}\\
&+\int _{0}^{t}e^{A_{s}}\frac{\partial u}{\partial x_{N}}
(T-s,\bd _{s},0)\,d\ln _{s}\\
&=\int _{0}^{t}e^{A_{s}}\frac{\partial u}{\partial x_{N}}
(T-s,\bd _{s},|\bn _{s}|)\sgn \bn _{s}\,d\bn _{s}. 
\end{align*}
Here we used the fact that $d\ln _{s}$ is carried by the set 
$\{ s\ge 0; \bn _{s}=0\} $. Therefore we have $\P _{x}$-a.s., 
\begin{align*}
 M_{t}=u(T,x)+&\int _{0}^{t}e^{A_{s}}\nabla _{x'}u(T-s,\bd _{s},|\bn _{s}|)
 \cdot d\bd _{s}\\
 +&\int _{0}^{t}e^{A_{s}}\frac{\partial u}{\partial x_{N}}
 (T-s,\bd _{s},|\bn _{s}|)\sgn \bn _{s}\,d\bn _{s} 
\end{align*}
for all $0\le t\le T$. 
We follow the notation in the proof of \cite[Theorem~4.4.2]{ks} to define 
$S_{n}:=\inf \{ t>0;|\B _{t}|\ge n\sqrt{N}\} ,\,n\in \N $. 
By the continuity of $\nabla _{x'}u$ and 
$\frac{\partial u}{\partial x_{N}}$, and by the boundedness of $V$, 
we deduce that 
\begin{align*}
 \E _{x}\left[ 
 M_{T\wedge S_{n}}
 \right] 
 =u(T,x)
\end{align*}
for every $n\in \N $. In fact, as $\{ \ln _{t}\} _{t\ge 0}$ satisfies 
\begin{align}\label{;expm}
 \E _{x}\left[ 
 e^{\kappa \ln _{t}}
 \right] <\infty 
\end{align}
for all $\kappa >0$ and $t\ge 0$ (see \eqref{;jloc} below), 
the process $\{ M_{t\wedge S_{n}}\} _{0\le t\le T}$ 
is a square-integrable martingale, from which we have 
$\E _{x}[M_{T\wedge S_{n}}]=\E _{x}[M_{0}]=u(T,x)$. 
Since 
\begin{align*}
 M_{T}=e^{A_{T}}u_{0}(\bd _{T},|\bn _{T}|)
\end{align*}
by definition, it remains to prove 
\begin{align}\label{;qFK2}
 \lim _{n\to \infty }\E _{x}\left[ 
 M_{T\wedge S_{n}}
 \right] 
 =\E _{x}\left[ 
 M_{T}
 \right] . 
\end{align}
To this end, we divide $\E _{x}\left[ 
 M_{T\wedge S_{n}}
 \right] $ 
into the sum 
\begin{align*}
 \E _{x}\left[ 
 M_{T}\ind _{\{ S_{n}>T\} }
 \right] +
 \E _{x}\left[ 
 M_{S_{n}}\ind _{\{ S_{n}\le T\} }
 \right] . 
\end{align*}
Due to the nonnegativity of $u_{0}$, the first term converges to 
$\E _{x}\left[ 
 M_{T}
 \right] $ as $n\to \infty $ by the monotone convergence theorem. To see 
 that the second term converges to $0$, we fix an exponent $p>1$ so that 
 $\lambda p<1/(2NT)$ for $\lambda $ given in the condition \eqref{;aFK}, and use 
 the H\"older inequality to obtain 
\begin{align*}
 \E _{x}\left[ 
 M_{S_{n}}\ind _{\{ S_{n}\le T\} }
 \right] &=\E _{x}\left[ 
 e^{A_{S_{n}}}u\left( T-S_{n},\bd _{S_{n}},|\bn _{S_{n}}|\right) 
 \ind _{\{ S_{n}\le T\} }\right] \\
 &\le 
 \left\{ 
 \E _{x}\left[ 
 e^{qA_{S_{n}}}
 \ind _{\{ S_{n}\le T\} }\right] 
 \right\} ^{1/q}
 \times 
 \left\{ 
 Ke^{\lambda pNn^2}\P _{x}(S_{n}\le T)
 \right\} ^{1/p}, 
\end{align*}
where $q$ is the conjugate of $p$. Note that the first factor of the 
last member is bounded because of \eqref{;expm} and the boundedness of 
$V$. The second factor converges to $0$ as $n\to \infty $ by the same 
argument as in the proof of \cite[Theorem~4.4.2]{ks} since $\lambda p<1/(2NT)$. 
Therefore \eqref{;qFK2} is proved, which ends the proof of the 
proposition. 
\end{proof}

\begin{rem}
For the solvability of \eqref{;hq1} and a priori estimates on the 
unique solution, 
see \cite[Chapter~IV]{lsu}. 
\end{rem}

In \cite{ii}, Ishige and Ishiwata studied the problem \eqref{;hq1} 
in the case of a singular potential given by $V(x)=c/|x|,\,c>0$; 
employing a PDE approach, they showed the existence of 
the threshold number $C^{*}_{N}$ such that for any nonnegative 
initial datum $u_{0}\,(\not\equiv 0)$ in $C_{0}(\R ^{N}_{+})$, 
the equation \eqref{;hq1} has a solution 
if $c\le C^{*}_{N}$ and has no solution otherwise. The constant 
$C^{*}_{N}$ is characterized as the best constant of Kato's inequality in 
$\R ^{N}_{+}$: 
\begin{align*}
 C^{*}_{N}\int _{\partial \R ^{N}_{+}}
 \frac{|\phi (x)|^2}{|x|}\sigma (dx)\le 
 \int _{\R ^{N}_{+}}|\nabla \phi (x)|^{2}\,dx, 
 \quad \phi \in C_{0}^{\infty }(\R ^{N}_{+}), 
\end{align*}
where $\sigma (dx)$ denotes the $(N-1)$-dimensional Lebesgue measure 
on $\partial \R ^{N}_{+}$. 
It is known \cite{her,ddm} that 
\begin{align*}
 C^{*}_{N}=2\frac{\Gamma ^2(\frac{N}{4})}{\Gamma ^2(\frac{N-2}{4})}. 
\end{align*}
The constant 
$j_{\frac{N-3}{2},1}$ in the condition \eqref{;cond3} of \tref{;tmain3} asymptotically 
coincides with $C^{*}_{N}$; indeed, Stirling's formula and 
\eqref{;j1asym}
entail that 
\begin{align*}
 \lim _{N\to \infty }\frac{1}{N}C^{*}_{N}=
 \lim _{N\to \infty }\frac{1}{N}j_{\frac{N-3}{2},1}=
 \frac{1}{2}. 
\end{align*}
In view of the fact \eqref{;convdiv1}, we conjecture that 
\begin{align*}
  \E _{x}\left[ 
  \exp \left( c\int _{0}^{t}\frac{d\ln _{s}}{|\bd _{s}|^{}}\right) 
  \right] 
  \begin{cases}
   <\infty & \text{if }c\le C^{*}_{N}, \\
   =\infty & \text{if }c>C^{*}_{N}, 
  \end{cases}
\end{align*}
for any $x\in \R ^{N}_{+}$\,$(x\neq 0)$ and $t>0$. 
We also note that $C^{*}_{N}$ is equal to $C_{N,\al }$ given in 
\eqref{;opfrac}, with $\al =1$ and with $N$ replaced by $N-1$. 
We show a connection of the representation 
\eqref{;fktr} with $(N-1)$-dimensional (relativistic) $1$-stable 
process in \ssref{;ssa2}. 

\subsection{Proof of \tref{;tmain3}}\label{;ss42}

We proceed to the proof of \tref{;tmain3}. 
From now on, we fix $x=(x',x_{N})\in \R _{+}^{N}$ and $t>0$. As 
$u_{0}$ is continuous and $u_{0}\ge (\not\equiv)\,0$, we may assume 
that there exist $\ep _{0}>0$, a nonempty open disc 
$D\subset \R ^{N-1}$ and an interval 
$J=(l,r)\subset (0,\infty )$ $(l<r)$ such that 
\begin{align}\label{;ubound}
 u_{0}(y)\ge \ep _{0} \quad \text{for all }y\in D\times J. 
\end{align}
We fix an $\A \in (0,1/2)$ and set $\ga =1-2\A $ 
as in preceding sections. For each $n\in \N $ we set an 
event $A_{n}$ by 
\begin{align*}
 A_{n}=\left\{ 
 \max _{\A t\le s\le (1-\A )t}\left| \bd _{s}\right| <1/n, \ 
 \bd _{t}\in D
 \right\} . 
\end{align*}
Let $n_{0}\in \N $ be such that $\vi $ is nonincreasing on $(0,1/n_{0}]$. Then, 
for $n\ge n_{0}$, by restricting the $\P _{x}$-expectation to the event 
$A_{n}\cap \{ |\bn _{t}|\in J\} $ and using \eqref{;ubound}, 
the expectation \eqref{;fktr} is bounded from below by 
\begin{equation}\label{;qpmain1}
 \begin{split}
 &\ep _{0}\E _{x}\left[ 
 \exp \left\{ 
 \int _{\A t}^{(1-\A )t}V(\bd _{s},0)\,d\ln _{s}
 \right\} ;\,A_{n}\cap \{ |\bn _{t}|\in J\} 
 \right] \\
 &\ge \ep _{0}\P _{x}\left( A_{n}\right)
 \times \II _{n}, 
 \end{split}
\end{equation}
where 
\begin{align*}
 \II _{n}:=\E _{x}\left[ 
 \exp \left\{ 
 \vi \Bigl( \frac{1}{n}\Bigr) \left( 
 \ln _{(1-\A )t}-\ln _{\A t}
 \right) 
 \right\} ;\, |\bn _{t}|\in J
 \right] . 
\end{align*}
Here we used the independence of $\bd $ and 
$\bn $. Applying \pref{;pI1} with $N-1$ replacing $N$, we have the 
following estimate for $\P _{x}(A_{n})$: 
\begin{align}\label{;3e1}
 \P _{x}(A_{n})\ge \Cst \left( \frac{1}{n}\right) ^{N-1}
  \exp\left( 
  -\frac{1}{2}j_{\frac{N-3}{2} ,1}^2n^2\gamma t
  \right) \quad \text{for all }n\in \N , 
\end{align}
with some positive constant $\Cst $ independent of $n$. 
As to $\II _{n}$, we have 
\begin{prop}\label{;pI2}
 There exists a positive constant $\Cst '\equiv \Cst '(x_{N},t,\A ,J)$ 
 independent of $n$ such that 
 \begin{align*}
  \II _{n}\ge \Cst '\vi \Bigl( \frac{1}{n}\Bigr) 
  \exp \left\{ 
  \frac{1}{2}\vi ^{2}\Bigl( \frac{1}{n}\Bigr) \gamma t-2\vi  
  \Bigl( \frac{1}{n}\Bigr) 
  \right\} \quad \text{for all }n\in \N . 
 \end{align*}
\end{prop}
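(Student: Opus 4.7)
The plan is to reduce $\II_n$, via the Markov property of $\bn$ applied first at $(1-\A)t$ and then at $\A t$, to an expectation involving only the local time increment $\ln_{\gamma t}$ of a fresh one-dimensional Brownian motion, and then to exploit the explicit joint density of $(\ln_s,\bn_s)$ to extract the Feynman-Kac eigenvalue $\kappa^2/2$ of a delta potential at the origin with coupling $\kappa:=\vi(1/n)$. The two applications of the Markov property will yield
\begin{align*}
 \II_n = \E_{x_N}\!\left[h(\bn_{\A t})\right], \qquad h(y) := \E_y\!\left[e^{\kappa \ln_{\gamma t}} g(\bn_{\gamma t})\right],
\end{align*}
where $g(z):=\P_z(|\bn_{\A t}|\in J)$. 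Since $g$ is continuous and strictly positive on $\R$, one has $g(b)\ge c_g$ for $|b|\le 1$, and likewise $g_1(\A t,y-x_N)\ge c_x$ for $|y|\le 1$, with positive constants $c_g, c_x$ depending only on $J$, $t$, $\A$, $x_N$.

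The core estimate would invoke the classical joint-density formula
\begin{align*}
 \P_y\!\left(\ln_s\in d\ell,\,\bn_s\in db\right)=\frac{|y|+|b|+\ell}{\sqrt{2\pi s^3}}\exp\!\left(-\frac{(|y|+|b|+\ell)^2}{2s}\right)d\ell\,db, \quad \ell>0,\,b\in\R,
\end{align*}
which follows by reflection from the classical identity $\P_y(\bn_s\in db,\,\ln_s>0)=g_1(s,|y|+|b|)\,db$. Dropping the $\ell=0$ contribution and restricting $b$ to $[-1,1]$, then using $|y|+|b|\ge 0$ in the linear factor and $|y|+|b|\le 2$ in the Gaussian exponent, would give, for $|y|\le 1$,
\begin{align*}
 h(y)\ge 2c_g\int_0^\infty\frac{\ell}{\sqrt{2\pi(\gamma t)^3}}\exp\!\left(\kappa\ell-\frac{(\ell+2)^2}{2\gamma t}\right)d\ell.
\end{align*}
Setting $\tilde\ell=\ell+2$ would pull out the factor $e^{-2\kappa}$; completing the square in the Gaussian exponent would pull out $e^{\kappa^2\gamma t/2}$; and the further substitution $u=\tilde\ell-\kappa\gamma t$ would leave a residual Gaussian integral of $(u+\kappa\gamma t-2)/\sqrt{2\pi(\gamma t)^3}\cdot e^{-u^2/(2\gamma t)}$ over $u\in[4-\kappa\gamma t,\infty)$.

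For $\kappa$ large enough (say $\kappa\gamma t\ge 8$ and $\kappa\sqrt{\gamma t}\ge 2$), I would restrict $u$ to $[-\kappa\gamma t/2,\kappa\gamma t/2]$: on this range the numerator is at least $\kappa\gamma t/4$ and the Gaussian retains a uniformly positive fraction of its mass, so the residual integral is bounded below by a universal multiple of $\kappa$. Assembling these pieces yields $h(y)\ge c_1\kappa e^{\kappa^2\gamma t/2-2\kappa}$ for $|y|\le 1$ and $\kappa\ge\kappa_0$ (for some $\kappa_0$ depending only on $t$ and $\A$), and integrating over $y\in[-1,1]$ against $g_1(\A t,y-x_N)$ will deliver the target bound for large $n$. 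For the remaining range $\kappa\le\kappa_0$, the trivial estimate $\II_n\ge\P_x(|\bn_t|\in J)>0$ combined with the boundedness of $\kappa e^{\kappa^2\gamma t/2-2\kappa}$ on $[0,\kappa_0]$ permits a choice of a single $\Cst'$ valid for all $n\in\N$. The main technical obstacle will be the joint bookkeeping of the cutoff $|y|+|b|\le 2$ (which must produce exactly the factor $e^{-2\kappa}$ in the exponent) and of the Laplace-type concentration of the residual Gaussian (which must produce exactly the prefactor $\kappa$); if either of these substitutions is handled loosely, the sharp form of the asserted bound will be missed.
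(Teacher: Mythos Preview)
Your approach is correct and follows essentially the same route as the paper: two applications of the Markov property (at times $\A t$ and $(1-\A)t$) reduce $\II_n$ to $E_y[e^{\kappa L_{\gamma t}};|B_{\gamma t}|<1]$ for $|y|<1$, and then the explicit joint density of $(L_{\gamma t},B_{\gamma t})$ produces the factor $\kappa\,e^{\kappa^2\gamma t/2-2\kappa}$. The paper evaluates this last integral via the closed-form $\mathrm{Erfc}$ expression (Borodin--Salminen, Formula~1.3.7), which delivers the bound uniformly in $n$ without your separate treatment of small $\kappa$; note also that after your substitutions the lower limit of integration is $2-\kappa\gamma t$, not $4-\kappa\gamma t$, though this does not affect the argument.
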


Combining these two estimates leads to \tref{;tmain3}: 
\begin{proof}[Proof of \tref{;tmain3}]
 By \eqref{;3e1}, \pref{;pI2} and the condition \eqref{;cond3}, 
 the right-hand side of \eqref{;qpmain1} diverges 
 as $n\to \infty $, which concludes the theorem. 
\end{proof}

It remains to prove \pref{;pI2}. 
For the rest of the section, we denote by 
the pair $(\{ B_{t}\} _{t\ge 0},\{ P_{x}\} _{x\in \R })$ a one-dimensional 
Brownian motion and by $\{ L_{t}\} _{t\ge 0}$ the local time process of 
$\{ B_{t}\} _{t\ge 0}$ at the origin, so that we may write 
\begin{align*}
 \II _{n}=E_{x_{N}}\left[ 
 \exp \left\{ 
 \vi \Bigl( \frac{1}{n}\Bigr) \left( L_{(1-\A )t}-L_{\A t}\right) 
 \right\} ;|B_{t}|\in J
 \right] . 
\end{align*}
Here $E_{x_{N}}$ denotes the expectation 
relative to $P_{x_{N}}$ as above. 

\begin{proof}[Proof of Proposition \ref{;pI2}] 
Restricting the $P_{x_{N}}$-expectation to the event $\{ |B_{\A t}|<1\} $ 
and using the Markov property, we have 
\begin{equation}\label{;q0pI2}
\begin{split}
 \II _{n}&\ge E_{x_{N}}\left[ 
 \psi (B_{\A t});\,|B_{\A t}|<1 
 \right] \\
 &=\int _{-1}^{1}dx\,
 g_{1}(\A t,x-x_{N})
 \psi (x), 
\end{split}
\end{equation}
where we set 
\begin{align*}
 \psi (x):=E_{x}\left[ 
 \exp \left\{ 
 \vi \Bigl( 
 \frac{1}{n}
 \Bigr) L_{\gamma t}
 \right\} ;\,|B_{(1-\A )t}|\in J
 \right] , \quad x\in \R . 
\end{align*}
Restricting the expectation to the event $\{ |B_{\gamma t}|<1\} $ in 
the definition of $\psi $, and using the Markov property again, we see that 
for every $x\in \R $, 
\begin{align}\label{;q1pI2}
 \psi (x)&\ge E_{x}\left[ 
 \exp \left\{ 
 \vi \Bigl( 
 \frac{1}{n}
 \Bigr) L_{\gamma t}
 \right\} 
 P_{B_{\gamma t}}\left( |B_{\A t}|\in J\right) ;\,
 |B_{\gamma t}|<1
 \right] \notag \\
 &\ge c_{1}E_{x}\left[ 
 \exp \left\{ 
 \vi \Bigl( 
 \frac{1}{n}
 \Bigr) L_{\gamma t}
 \right\} ;\,|B_{\gamma t}|<1
 \right] , 
\end{align}
where 
$c_{1}:=\inf \limits_{|z|\le 1}P_{z}\left( |B_{\A t}|\in J\right) >0$. 
We recall that for every $x\in \R $ and $s>0$, the joint distribution of 
$L_{s}$ and $B_{s}$ under $P_{x}$ is given by 
\begin{align}
 P_{x}\left( 
 L_{s}=0,B_s\in dz
 \right) &=
 \frac{1}{\sqrt{2\pi s}}\exp \left\{ 
 -\frac{(z-x)^2}{2s}
 \right\} \left\{ 
 1-\exp \left( 
 -\frac{2xz}{s}
 \right) 
 \right\} dz \notag  
\intertext{for $z\in \{ xz\ge 0\} $, and }
 P_{x}\left( 
 L_{s}\in dy,B_s\in dz
 \right) &=\frac{1}{\sqrt{2\pi s^3}}\left( y+|z|+|x|\right) 
 \exp \left\{ -\frac{\left( y+|z|+|x|\right) ^2}{2s}
 \right\} dydz  \label{;jloc}
\end{align}
for $y>0,z\in \R $; see \cite[p.155, Formula~1.3.8]{bs} and also 
Exercise~\thetag{3.8} in \cite[Chapter~XII]{rey}. Using 
this expression of the joint distribution, we see that 
the expectation in \eqref{;q1pI2} is estimated as, for all $|x|<1$, 
\begin{align*}
 &E_{x}\left[ 
 \exp \left\{ 
 \vi \Bigl( 
 \frac{1}{n}
 \Bigr) L_{\gamma t}
 \right\} ;\,|B_{\gamma t}|<1
 \right] \\
 &=\int _{-1}^{1}
 dz\,g_{1}(\ga t,z-x)
 \\
 &\quad +\frac{1}{2}\vi \Bigl( \frac{1}{n}\Bigr) \int _{-1}^{1}dz\,
 \exp \left\{ 
 \frac{1}{2}\vi ^{2}\Bigl( \frac{1}{n}\Bigr) \gamma t-
 \vi \Bigl( \frac{1}{n}\Bigr) \left( |z|+|x|\right) 
 \right\} \mathrm{Erfc}\left( 
 \frac{|z|+|x|}{\sqrt{2\gamma t}}-\vi \Bigl( \frac{1}{n}\Bigr) 
 \sqrt{\frac{\gamma t}{2}}
 \right) \\
 &\ge \vi \Bigl( \frac{1}{n}\Bigr) 
 \exp \left\{ 
 \frac{1}{2}\vi ^{2}\Bigl( \frac{1}{n}\Bigr) \gamma t-
 2\vi \Bigl( \frac{1}{n}\Bigr) 
 \right\} 
 \mathrm{Erfc}\left( 
 \sqrt{\frac{2}{\gamma t}}
 \right) 
\end{align*}
with 
\begin{align*}
 \mathrm{Erfc}(z)=\frac{2}{\sqrt{\pi }}\int _{z}^\infty e^{-y^2}\,dy, 
 \quad z\in \R . 
\end{align*}
For the first equality in the above estimate, refer also to 
\cite[p.155, Formula~1.3.7]{bs}. Combining this estimate with \eqref{;q1pI2}, 
we see from \eqref{;q0pI2} that 
\begin{align*}
 \II _{n}\ge c_{1}c_{2}\vi \Bigl( \frac{1}{n}\Bigr) 
 \exp \left\{ 
 \frac{1}{2}\vi ^{2}\Bigl( \frac{1}{n}\Bigr) \gamma t-
 2\vi \Bigl( \frac{1}{n}\Bigr) 
 \right\} , 
\end{align*}
where 
\begin{align*}
 c_{2}:=\mathrm{Erfc}\left( 
 \sqrt{\frac{2}{\gamma t}}
 \right) \int _{-1}^{1}dx\,
 g_{1}(\A t,x-x_{N}). 
\end{align*}
The proof is complete. 
\end{proof}

\appendix 
\section*{Appendix}
\renewcommand{\thesection}{A}
\setcounter{equation}{0}
\setcounter{prop}{0}
\setcounter{lem}{0}
\setcounter{rem}{0}

\subsection{Proof of \lref{;keylem}}\label{;ssa1}

In this subsection we give a proof of \lref{;keylem}. 
For every $\mu >-1$, we denote by 
\begin{align*}
 0<j_{\mu ,1}<\cdots <j_{\mu ,k}<\cdots 
\end{align*}
the positive zeros of $J_{\mu }$. It is known that 
\begin{align*}
 j_{\mu ,k}=\left( k+\frac{1}{2}\mu -\frac{1}{4}\right) \pi +O\left( \frac{1}{k}\right) \quad \text{as }k\to \infty 
\end{align*}
when $\mu \neq \pm 1/2$; see, e.g., \cite[p.506]{wat}. 
Recall also $J_{1/2}(z)=\sqrt{2/(\pi z)}\sin z$, 
$J_{-1/2}(z)=\sqrt{2/(\pi z)}\cos z$. To prove the lemma, 
we need the following: 
\begin{lem}\label{;l2pI1}
 For $\mu >-1/2$, it holds that 
 \begin{align*}
  \lim _{k\to \infty }\sqrt{\frac{\pi j_{\mu ,k}}{2}}\left| 
  J_{\mu +1}(j_{\mu ,k})
  \right| =1. 
 \end{align*}
\end{lem}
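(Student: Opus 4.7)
The plan is to reduce the claim to the classical large-argument (Hankel) asymptotic
\begin{align*}
 J_\nu(z)=\sqrt{\frac{2}{\pi z}}\cos\!\left(z-\frac{\nu\pi}{2}-\frac{\pi}{4}\right)+O(z^{-3/2})\quad (z\to\infty),
\end{align*}
combined with the zero asymptotic $j_{\mu,k}=(k+\mu/2-1/4)\pi+O(1/k)$ recalled in the paragraph preceding the statement. Applied with $\nu=\mu+1$ at $z=j_{\mu,k}$, this gives
\begin{align*}
 \sqrt{\frac{\pi j_{\mu,k}}{2}}\,J_{\mu+1}(j_{\mu,k})=\cos\!\left(j_{\mu,k}-\frac{(\mu+1)\pi}{2}-\frac{\pi}{4}\right)+O(1/k),
\end{align*}
so everything boils down to showing that the cosine on the right tends in absolute value to $1$.

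Substituting the zero asymptotic, the argument of the cosine equals
\begin{align*}
 \Bigl(k+\tfrac{\mu}{2}-\tfrac{1}{4}\Bigr)\pi-\tfrac{(\mu+1)\pi}{2}-\tfrac{\pi}{4}+O(1/k)=(k-1)\pi+O(1/k),
\end{align*}
and the $1$-Lipschitz property of cosine gives $|\cos((k-1)\pi+O(1/k))|=1+O(1/k)$. Taking absolute values in the previous display and letting $k\to\infty$ yields the claim. An equivalent route, if one prefers to avoid the $\mu+1$ shift inside the cosine, is to use the differentiation identity $J_\mu'(z)=(\mu/z)J_\mu(z)-J_{\mu+1}(z)$; since $J_\mu(j_{\mu,k})=0$, this gives $J_{\mu+1}(j_{\mu,k})=-J_\mu'(j_{\mu,k})$, and one applies Hankel's asymptotic for $J_\mu'$ (whose leading term is $-\sqrt{2/(\pi z)}\sin(z-\mu\pi/2-\pi/4)$), whose sine at $j_{\mu,k}$ tends to $\pm 1$ by the same zero asymptotic.

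The only real care needed is verifying that the $O(1/k)$ error in the zero asymptotic does not degrade the leading $O(z^{-1/2})$ behavior of $J_{\mu+1}(z)$; this is automatic because a perturbation of size $O(1/k)$ in the argument of cosine produces only an $O(1/k)$ perturbation in its value, which is absorbed into the $O(1/j_{\mu,k})=O(1/k)$ error already present in the Hankel expansion. No finer information about $j_{\mu,k}$ (beyond what is already stated in the paper) is required, and the restriction $\mu>-1/2$ simply ensures the Hankel formula applies in its standard form.
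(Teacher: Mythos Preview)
Your argument is correct. It differs from the paper's in one respect worth noting: the paper does \emph{not} invoke the zero asymptotic $j_{\mu,k}=(k+\mu/2-1/4)\pi+O(1/k)$ at all. Instead it applies the Hankel expansion simultaneously to $J_{\mu}$ and $J_{\mu+1}$ at $z=j_{\mu,k}$; since $J_{\mu}(j_{\mu,k})=0$, the first expansion forces $|\cos(j_{\mu,k}-\mu\pi/2-\pi/4)|<\epsilon$, hence $|\sin(j_{\mu,k}-\mu\pi/2-\pi/4)|>\sqrt{1-\epsilon^{2}}$, and since the phase in the $J_{\mu+1}$ expansion differs by $\pi/2$ this sine is exactly what $\sqrt{\pi j_{\mu,k}/2}\,J_{\mu+1}(j_{\mu,k})$ is approximated by. Your route instead substitutes the known location of $j_{\mu,k}$ to evaluate the phase explicitly as $(k-1)\pi+O(1/k)$. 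The paper's version is slightly more self-contained (no need for the zero formula, and in particular no need to check the side case $\mu=1/2$ excluded in the paper's statement of that formula), while yours is more direct and yields an explicit $O(1/k)$ rate. Your alternative via $J_{\mu+1}(j_{\mu,k})=-J_{\mu}'(j_{\mu,k})$ is essentially the paper's idea in disguise, since differentiating the Hankel expansion for $J_{\mu}$ is what produces the $\pi/2$ phase shift.
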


\begin{proof}
By the asymptotic expansion 
\cite[Equation~\thetag{5.11.6}]{leb} 
of $J_{\mu }$ with $\mu >-1/2$, 
for any $\ep \in (0,1)$, there exists an $L>0$ such that 
for all $z>L$, both 
\begin{align*}
 \left| 
 \sqrt{\frac{\pi z}{2}}J_{\mu }(z)-\cos \left( 
 z-\frac{1}{2}\mu \pi -\frac{1}{4}\pi 
 \right) 
 \right| &<\ep 
\intertext{and }
 \left| 
 \sqrt{\frac{\pi z}{2}}J_{\mu +1}(z)-\cos \left\{ 
 z-\frac{1}{2}(\mu +1)\pi -\frac{1}{4}\pi 
 \right\} 
 \right| &<\ep 
\end{align*}
hold. Then, for all $k$ such that $j_{\mu ,k}>L$, we have 
\begin{align*}
 \left| 
 \cos \left( 
 j_{\mu ,k}-\frac{1}{2}\mu \pi -\frac{1}{4}\pi 
 \right) 
 \right| <\ep && \text{and} && 
 \left| 
 \sqrt{\frac{\pi j_{\mu ,k}}{2}}J_{\mu +1}(j_{\mu ,k})-\sin \left( 
 j_{\mu ,k}-\frac{1}{2}\mu \pi -\frac{1}{4}\pi 
 \right) 
 \right| <\ep . 
\end{align*}
Therefore, for sufficiently large $k$, 
\begin{align*}
 \sqrt{1-\ep ^2}-\ep <
 \sqrt{\frac{\pi j_{\mu ,k}}{2}}\left| J_{\mu +1}(j_{\mu ,k})\right| 
 <1+\ep , 
\end{align*}
from which the assertion of the lemma follows. 
\end{proof}

We are in a position to prove Lemma \ref{;keylem}. 
For every positive integer $N$, set $\mu =(N-2)/2$. 

\begin{proof}[Proof of Lemma \ref{;keylem}] 
As it is known \cite[Section~8]{ken}, \cite[p.373, Formula~1.1.4]{bs} that 
\begin{align}\label{;a1e1}
 P_{\xi }\left( 
 \max _{0\le s\le T}|B_{s}|<1
 \right) 
 =\frac{2}{|\xi |^{\mu }}\sum _{k=1}^\infty 
 \frac{J_{\mu }\left( j_{\mu ,k}|\xi |\right) }{j_{\mu ,k}J_{\mu +1}(j_{\mu ,k})}\exp \left( -\frac{1}{2}j_{\mu ,k}^2T\right) 
\end{align}
for all $|\xi |<1$, 
we have 
\begin{equation}\label{;q4pI1}
\begin{split}
&\int _{|\xi |<1}d\xi \,P_{\xi }\left( 
 \max _{0\le s\le T}|B_{s}|<1
 \right) \\
 &=2\varpi _{N}\int _{0}^{1}dr\,
 r^{\mu +1}\sum _{k=1}^\infty 
 \frac{J_{\mu }\left( j_{\mu ,k}r\right) }{j_{\mu ,k}J_{\mu +1}(j_{\mu ,k})}\exp \left( -\frac{1}{2}j_{\mu ,k}^2T\right) . 
\end{split}
\end{equation}
First we consider the case $\mu \ge 0$ (i.e., $N\ge 2$). 
By Lemma \ref{;l2pI1} and by the fact that $J_{\mu }$ is a bounded function for $\mu \ge 0$, 
we see that the series in the integrand relative to $r$ converges uniformly 
on the interval $[0,1]$, hence the termwise integration is possible. 
By the relation 
$\left\{ z^{\mu +1}J_{\mu +1}(z)\right\} '=z^{\mu +1}J_{\mu }(z)$, 
we have 
\begin{align*}
 \int _{0}^{1}r^{\mu +1}J_{\mu }(j_{\mu ,k}r)\,dr
 =\frac{J_{\mu +1}(j_{\mu ,k})}{j_{\mu ,k}}. 
\end{align*}
Therefore the right-hand side of \eqref{;q4pI1} is equal to 
\begin{align*}
 2\varpi _{N}\sum _{k=1}^\infty 
 \frac{1}{j_{\mu ,k}^2}\exp \left( -\frac{1}{2}j_{\mu ,k}^2T\right) , 
\end{align*}
which yields the lemma for $N\ge 2$.  
By writing down the right-hand side 
of \eqref{;a1e1} into 
\begin{align*}
 \frac{4}{\pi }
 \sum _{k=1}^{\infty }
 (-1)^{k-1}
 \frac{\cos \left( \frac{2k-1}{2}\pi \xi \right) }{2k-1}
 \exp \left\{ 
 -\frac{\pi ^2}{8}(2k-1)^2T
 \right\} 
\end{align*}
for $\mu =-1/2$, the case $N=1$ is similarly proved. 
\end{proof}

\subsection{A connection of \eqref{;fktr} with $1$-stable processes}
\label{;ssa2}

In this subsection we explore a connection of the Feynman-Kac representation 
\eqref{;fktr} with $1$-stable processes. For ease of exposition, 
we start the one-dimensional Brownian motion $\bn $ from the 
origin, that is, we consider the expression \eqref{;fktr} on the boundary 
$\partial \R ^{N}_{+}$, with which we define the function $u:[0,\infty )\times \R ^{N-1}\to [0,\infty )$ 
by 
\begin{align}\label{;fkd}
 u(t,x)=
 \E _{(x,0)}\left[ 
 u_{0}(\bd _{t},\left| \bn _{t}\right| )\exp 
 \left\{ 
 \int _{0}^{t}V(\bd _{s})\,d\ln _{s}
 \right\} 
 \right] . 
\end{align}
Here and below we regard $V:\partial \R ^{N}_{+}\to \R $ as a function on 
$\R ^{N-1}$ and simply write $V(x,0)=V(x)$ for 
$(x,0)\in \partial \R ^{N}_{+}$. 

For every real-valued continuous function $w$ on $[0,\infty )$ vanishing 
at the origin, we write 
\begin{align*}
 \ovl{w}_{t}=\max _{0\le s\le t}w_{s}, \quad t\ge 0, 
\end{align*}
and denote by $\tau _{\cdot }(w)$ the right-continuous inverse of $\ovl{w}$: 
\begin{align*}
 \tau _{a}(w)=\inf \left\{ 
 t>0;\,\ovl{w}_{t}>a
 \right\} ,\quad a\ge 0. 
\end{align*}
Let $\{ \be _{t}\} _{t\ge 0}$ together with a probability measure $P$, 
be a one-dimensional standard Brownian motion and 
$(\{ \W (t)\} _{t\ge 0},\{ Q_{x}\} _{x\in \R ^{N-1}})$ an 
$(N-1)$-dimensional Brownian motion. We assume that these two 
processes are defined on distinct measurable spaces. 
By the equivalence in law between $L^{N}$ and 
$\ovl{\be }$ due to L\'evy, we have the following identity 
as to the additive functional in \eqref{;fkd}: 
\begin{align*}
 \int _{0}^{\cdot }
 V(\bd _{s})\,dL^{N}_{s}
 \stackrel{(d)}{=}\int _{0}^{\cdot }V(\W (s))\,d\ovl{\be }_{s}, 
\end{align*}
where in the right-hand side, the law is with respect to 
the product probability measure $\qp{x}$. We make 
the change of variables with $s=\tau _{a}(\be )$ to see that 
for all $t\ge 0$, 
\begin{align}\label{;tch}
 \int _{0}^{t}V(\W (s))\,d\ovl{\be }_{s}
 =\int _{0}^{\ovl{\be }_{t}}V\left( \W (\tau _{a}(\be ))\right) da. 
\end{align}
It is well known that the process $\{ \W (\tau _{a}(\be ))\} _{a\ge 0}$ 
has the same law as a rotationally invariant $1$-stable process 
(or Cauchy process) starting from $x$; indeed, for every $a\ge 0$ and 
$\xi \in \R ^{N-1}$, 
\begin{equation}\label{;cauchy}
 \begin{split}
 &\qp{x}
 \left[ 
 \exp \left\{ 
 i\xi \cdot \left( W(\tau _{a}(\be ))-x\right) 
 \right\} 
 \right] \\
 &=P\left[ 
 \exp \left\{ 
 -\frac{1}{2}|\xi |^{2}\tau _{a}(\be )
 \right\} 
 \right] \\
 &=\exp \left( -a|\xi |\right) , 
 \end{split}
\end{equation}
where the last equality follows from the fact 
\begin{align*}
 P(\tau _{a}(\be )\in ds)
 =\frac{a}{\sqrt{2\pi s^{3}}}\exp \left( 
 -\frac{a^2}{2s}
 \right) ds, \quad s>0, 
\end{align*}
when $a>0$. In \eqref{;cauchy} and in the remainder of this section, 
for any probability measure $\mu $, 
the notation $\mu [\,\cdot \,]$ stands for the expectation with 
respect to $\mu $. 

The connection will be clearer if we take the Laplace transform 
of \eqref{;fkd} in variable $t$. Given a positive real $m$, 
let $(\{ \Xm _{t}\} _{t\ge 0},\{ \pr _{x}\} _{x\in \R ^{N-1}})$ be 
an $(N-1)$-dimensional relativistic $1$-stable process with 
mass $m$, that is, under $\pr _{x}$, the process $\Xm -x$ is a 
L\'evy process with characteristic function 
\begin{align}\label{;chfrel}
 \ex _{x}\left[ 
 \exp \left\{ 
 i\xi \cdot (\Xm _{t}-x)
 \right\} 
 \right] =\exp \left\{ 
 -t\left( 
 \sqrt{|\xi |^{2}+m^2}-m
 \right) 
 \right\} , 
 \quad t\ge 0,\, \xi \in \R ^{N-1}. 
\end{align}
The infinitesimal generator of $\Xm $ is the relativistic Schr\"odinger operator 
$m-\sqrt{-\Delta +m^2}$ (cf.~\cite{cms}). 
For each $x\in \R ^{N-1}$, set  
 \begin{align*}
  u_{m}(x)
  :=\int _{0}^{\infty }dt\,
  e^{-\frac{1}{2}m^2t}u(t,x). 
 \end{align*}
Then the function $u_{m}$ is related with the process $\Xm $ in the following 
fashion: 
\begin{prop}\label{;prelat}
 It holds that for all $x\in \R ^{N-1}$, 
 \begin{align}\label{;lt}
 u_{m}(x)=
 \int _{0}^\infty dt\,e^{-mt}
 E_{x}\left[ 
 f_{m}(\Xm _{t})\exp \left\{ 
 \int _{0}^{t}V(\Xm _{s})\,ds
 \right\} 
 \right] , 
\end{align}
where $f_{m}:\R ^{N-1}\to [0,\infty ) $ is given by 
\begin{align*}
 f_{m}(x)=\int _{0}^{\infty }dt\,e^{-\frac{1}{2}m^2t}f_{0}(t,x) 
\end{align*}
with 
\begin{align*}
f_{0}(t,x):=
\int _{\R ^{N-1}}dz\,g_{N-1}(t,z-x)
\int _{\R }\frac{dy}{t}\,|y|g_{1}(t,y)u_{0}(z,|y|), 
\quad t>0,\,x\in \R ^{N-1}. 
\end{align*}
\end{prop}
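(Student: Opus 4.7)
I will transform both sides of \eqref{;lt} into the common expression
$\int_0^\infty da\,\qp{x}[\mathcal{E}_a e^{-m^2\tau_a/2} f_m(W(\tau_a))]$,
where $W$ and $\be$ are the independent Brownian motions introduced earlier in this subsection, $\tau=\tau(\be)$ is the $1/2$-stable subordinator (its hitting-time process), and $\mathcal{E}_a := \exp(\int_0^a V(W(\tau_{a'}))\,da')$.

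\textbf{Reducing the right-hand side.} Use the subordinator representation $X^{(m)}_t = x + W(S^{(m)}_t)$, where $S^{(m)}$ has Laplace exponent $\sqrt{2\lambda+m^2}-m$. Its law restricted to $\mathcal{F}^\tau_T$ is the law of $\tau$ weighted by $e^{mT-m^2\tau_T/2}$. Substituting into the right-hand side of \eqref{;lt} and canceling $e^{-mt}\cdot e^{mt}$, one obtains precisely the common target form.

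\textbf{Reducing the left-hand side.} By L\'evy's identity $(|\bn|,\ln)\stackrel{(d)}{=}(\ovl{\be}-\be,\ovl{\be})$ together with the time-change identity $\int_0^t V(W_s)\,d\ovl{\be}_s = \int_0^{\ovl{\be}_t}V(W(\tau_a))\,da$, we have
$u_m(x) = \int_0^\infty dt\, e^{-m^2 t/2}\,\qp{x}[u_0(W_t,\ovl{\be}_t-\be_t)\mathcal{E}_{\ovl{\be}_t}]$.
Decompose $[0,\infty)$ into the excursion intervals $[G_i, D_i]$ of $\ovl{\be}-\be$ away from $0$, at respective levels $a_i := \ovl{\be}_{G_i}$. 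On each interval, $\mathcal{E}_{\ovl{\be}_t}=\mathcal{E}_{a_i}$ is $\mathcal{F}^W_{G_i-}$-measurable, and the strong Markov property of $W$ at $G_i=\tau_{a_i-}$ reduces the integral over $[G_i,D_i]$ to $e^{-m^2 G_i/2}\,F^{W(G_i)}(\tilde e^i,\zeta^i)$, with
$F^w(e,\zeta):=\int_0^\zeta e^{-m^2 s/2}\int g_{N-1}(s,z-w)u_0(z,e(s))\,dz\,ds$
and $\tilde e^i$, $\zeta^i=D_i-G_i$ denoting the $i$th excursion path and length. Since the point process $\{(a_i,\tilde e^i)\}$ has compensator $da\otimes n(de)$, where $n$ is It\^o's excursion measure normalized so that the local time equals $\ovl{\be}$, applying the compensation formula and the change of variables $a=\ovl{\be}_t$ (so $d\ovl{\be}_t=da$) yields the common target form but with $f_m$ replaced by $H(w):=\int n(de)F^w(e,\zeta(e))$. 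The classical marginal formula
$n(e(s)\in dy,\,\zeta(e)>s)=2(y/s)g_1(s,y)\,dy$ (for $y,s>0$)
plugged into the definition of $H$ yields $H(w)=f_m(w)$ by direct computation, and using $\tau_{a-}=\tau_a$ almost surely for each fixed $a$ the two reductions coincide, proving \eqref{;lt}.

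\textbf{Main obstacle.} The delicate step is the excursion-theoretic reduction of the left-hand side: one must correctly apply It\^o's compensation formula in the joint $(W,\be)$-filtration, respecting the predictable-versus-excursion decomposition of $\mathcal{E}_{a_i}\,F^{W(G_i)}(\tilde e^i,\zeta^i)$, and verify the precise normalization of $n$ so that the factor of $2$ in the excursion marginal translates into $H(w)=f_m(w)$ with matching constants.
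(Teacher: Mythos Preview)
Your approach is correct but takes a genuinely different route from the paper for the left-hand side. Your reduction of the right-hand side is essentially the same as the paper's: both use the subordination $\Xm _{t}=W(\tau _{t}(\btm ))$ together with the Cameron--Martin density $e^{mt-m^{2}\tau _{t}/2}$ to cancel the outer $e^{-mt}$ and arrive at the common target $\int_{0}^{\infty }da\,\qp{x}[\mathcal{E}_{a}e^{-m^{2}\tau _{a}/2}f_{m}(W(\tau _{a}))]$.

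For the left-hand side, however, the paper does \emph{not} invoke It\^o's compensation formula. Instead it proves a separate lemma (\lref{;lrewr}) giving
\[
u(t,x)=\int _{0}^{t}\mu _{L}(dv)\,Q_{x}\otimes P_{v,0}\!\left[f_{0}(t-v,W(v))\exp\!\left(\int _{0}^{v}V(W(s))\,dL_{s}\right)\right],
\]
via the classical last-zero decomposition of Brownian motion: conditionally on $\gamma _{t}=v$, the pre-$v$ path is a Brownian bridge and the post-$v$ path is (a sign times) a Brownian meander of duration $t-v$. The meander marginal $P(M_{t-v}\in dy)=\sqrt{2\pi /(t-v)}\,y\,g_{1}(t-v,y)\,dy$ is exactly what produces the factor $(|y|/t)g_{1}(t,y)$ in $f_{0}$; this plays the same role as your excursion entrance law $n(e(s)\in dy,\zeta >s)=2(y/s)g_{1}(s,y)\,dy$. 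The paper then matches by expanding $f_{m}(W(v))=\int _{v}^{\infty }dt\,e^{-m^{2}(t-v)/2}f_{0}(t-v,W(v))$ and applying Fubini. Your excursion-theoretic argument is conceptually cleaner and avoids the intermediate $\mu _{L}$/bridge representation, but it requires more care: the functional $H(a,e)$ in your compensation step involves $W_{\tau _{a-}+s}$, which is \emph{not} $\mathcal{G}_{\tau _{a-}}$-measurable in the joint filtration, so one must first condition on the full path of $W$ (using independence) to apply compensation in the $\beta $-filtration, and only afterwards apply the Markov property of $W$ at $\tau _{a}$ to collapse the excursion integral to $f_{m}(W(\tau _{a}))$. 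The paper's route, by contrast, uses only the elementary arcsine/meander facts and the conditioning trick from \cite[Chapter~VI, Exercise~(2.29)]{rey}, trading abstraction for self-containedness.
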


For the Brownian motion $\be $ introduced above, we denote its local time 
at level $0$ by $\{ L_{t}\} _{t\ge 0}$, to which we associate the 
measure $\mu _{L}$ on $(0,\infty )$ via 
\begin{align*}
 \mu _{L}((a,b])&:=P \left[ L_{b}\right] -P \left[ L_{a}\right] \\
 &=\int _{a}^{b}\frac{ds}{\sqrt{2\pi s}}
\end{align*}
for all $0<a<b$. For each $v>0$ and $y\in \R $, we denote by 
$P_{v,y}$ the regular version of conditional probability 
$P(\,\cdot \,|\,\be _{v}=y)$, namely under $P_{v,y}$, the process 
$\{ \be _{s}\} _{0\le s\le v}$ is a Brownian bridge over 
$[0,v]$ starting from $0$ and ending at $y$. From now on, 
we fix $x\in \R ^{N-1}$. We start the proof of \pref{;prelat} with the 
following lemma: 

\begin{lem}\label{;lrewr}
 It holds that for every $t>0$, 
 \begin{align*}
  u(t,x)=
  \int _{0}^{t}\mu _{L}(dv)\,
  Q_{x}\otimes P_{v,0}\left[ 
  f_{0}(t-v,W(v))\exp 
  \left( 
  \int _{0}^{v}V(W(s))\,dL_{s}
  \right) 
  \right] . 
 \end{align*}
\end{lem}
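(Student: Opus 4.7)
The plan is to decompose the path of $\bn$ at $g_t := \sup\{s\le t : \bn_s = 0\}$, the last zero of $\bn$ before time $t$. Since $d\ln_s$ is carried by $\{s : \bn_s = 0\}$, one has $\int_0^t V(\bd_s)\,d\ln_s = \int_0^{g_t} V(\bd_s)\,d\ln_s$, so only the pre-$g_t$ portion of the path contributes. I would then invoke the classical bridge--meander decomposition: under $\P_{(x,0)}$, conditionally on $g_t = v\in(0,t)$, the processes $(\bn_s)_{0\le s\le v}$ and $(\bn_{v+s})_{0\le s\le t-v}$ are independent, with the former a Brownian bridge from $0$ to $0$ on $[0,v]$ (whose local time at $0$ is $(\ln_s)_{0\le s\le v}$) and the latter a Brownian meander of length $t-v$; in particular $|\bn_t|$ is independent of $\bd$ and of $\bn_{[0,v]}$ with Rayleigh density $(y/(t-v))\exp(-y^2/(2(t-v)))\,dy$ on $(0,\infty)$, and $g_t$ itself obeys the arcsine law $P(g_t\in dv) = dv/(\pi\sqrt{v(t-v)})$. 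Since $\bd$ is independent of $\bn$, the conditioning leaves $\bd$ intact, and $\bd_t - \bd_v$ remains an independent centered Gaussian of variance $t-v$ in $\R^{N-1}$.

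Integrating out $\bd_t - \bd_v$ and $|\bn_t|$ given $g_t = v$ and $\bd_v = w$ then yields
\begin{align*}
&\E_{(x,0)}\bigl[u_0(\bd_t,|\bn_t|)\bigm|g_t=v,\,\bd_v=w\bigr]\\
&\quad = \int_{\R^{N-1}}dz\,g_{N-1}(t-v,z-w)\int_{0}^{\infty}\frac{y}{t-v}\,e^{-y^2/(2(t-v))}u_0(z,y)\,dy\\
&\quad = \tfrac{1}{2}\sqrt{2\pi(t-v)}\,f_0(t-v,w),
\end{align*}
the second equality following from $(y/(t-v))e^{-y^2/(2(t-v))} = \sqrt{2\pi(t-v)}\,(|y|/(t-v))g_1(t-v,y)$ for $y>0$ together with the $y\leftrightarrow -y$ symmetry of the integrand defining $f_0$. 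Combining this with the arcsine density of $g_t$ produces the bookkeeping identity
\begin{align*}
P(g_t\in dv)\cdot \tfrac{1}{2}\sqrt{2\pi(t-v)} = \frac{dv}{\sqrt{2\pi v}} = \mu_L(dv).
\end{align*}

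It then remains to recast the expectation over the pre-$g_t$ portion as one under $Q_x\otimes P_{v,0}$. Under $P_{v,0}$ the process $\be$ on $[0,v]$ is precisely a Brownian bridge from $0$ to $0$; hence, in law, $(\bn_s)_{0\le s\le v}$ conditioned on $g_t = v$ coincides with $(\be_s)_{0\le s\le v}$ under $P_{v,0}$, and under this identification $\ln$ on $[0,v]$ matches $L$. Likewise $\bd$ under $\P$ is an $(N-1)$-dimensional Brownian motion starting at $x$ and corresponds to $W$ under $Q_x$. Assembling these pieces gives
\begin{align*}
u(t,x) = \int_0^t \mu_L(dv)\,Q_x\otimes P_{v,0}\left[f_0(t-v,W(v))\exp\left(\int_0^v V(W(s))\,dL_s\right)\right],
\end{align*}
which is the claim. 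The main obstacle is the bridge--meander decomposition of $\bn$ at $g_t$ and the identification of the Rayleigh marginal of $|\bn_t|$; once those structural facts are in hand, matching the factor $\tfrac{1}{2}\sqrt{2\pi(t-v)}$ against the arcsine density to produce $\mu_L$, and recognizing the conditional law of the pre-$g_t$ path as that governed by $Q_x\otimes P_{v,0}$, is routine.
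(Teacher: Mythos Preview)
Your proof is correct and follows essentially the same route as the paper's: decompose at the last zero $g_t$ of the one-dimensional component, use the arcsine law for $g_t$ together with the bridge--meander decomposition, integrate out the meander endpoint to produce $f_0$, and match the normalizing factor against the arcsine density to obtain $\mu_L(dv)=dv/\sqrt{2\pi v}$. The only cosmetic difference is that the paper first passes to the $(W,\be)$ representation under $Q_x\otimes P$ via the trivial identity in law and performs the decomposition for $\be$, whereas you decompose $\bn$ directly and identify with $(W,\be)$ under $Q_x\otimes P_{v,0}$ at the end; the content is the same.
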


In order to prove this lemma, we recall some facts on the path decomposition 
of Brownian motion at the last zero before a fixed time. For every given 
$t>0$, we set 
\begin{align*}
 \ga _{t}=\sup \left\{ 
 s\le t;\,\be _{s}=0
 \right\} . 
\end{align*}
Then it holds that conditionally on $\ga _{t}=v\ (0<v<t)$: 
\begin{enumerate}[(i)]{}
 \item $\{ \be _{s}\} _{0\le s\le v}$ is identical in law with 
 a Brownian bridge $\{ b_{s}\} _{0\le s\le v}$ such that 
 $b_{0}=b_{v}=0$; 
 
 \item $\{ \be _{s+v}\} _{0\le s\le t-v}$ is identical in law with 
 \begin{align*}
  \{ \mathbf{n}M_{s}\} _{0\le s\le t-v}, 
 \end{align*}
 where $\mathbf{n}$ is a Bernoulli distributed random variable 
 with parameter $1/2$ and $M$ is a Brownian meander of 
 duration $t-v$, 
\end{enumerate}
with these three elements $b,\mathbf{n},M$ being independent. 
It is also known that $\ga _{t}$ follows the arcsine law: 
 \begin{align*}
  P(\ga _{t}\in dv)=
  \frac{dv}{\pi \sqrt{v(t-v)}}, \quad v\in (0,t). 
 \end{align*}
For descriptions of the decomposition, see \cite[Section~3.1]{may} 
and references therein. 

\begin{proof}[Proof of \lref{;lrewr}]
 By the equivalence in law and by the fact that 
 the local time $L$ does not increase when $\be $ is away from $0$, 
 we may write 
 \begin{align*}
  u(t,x)&=\qp{x}
  \left[ 
  u_{0}(W(t),|\be _{t}|)\exp 
  \left( 
  \int _{0}^{t}V(W(s))\,dL_{s}
  \right) 
  \right] \\
  &=\qp{x}
  \left[ 
  u_{0}(W(t),|\be _{t}|)\exp 
  \left( 
  \int _{0}^{\ga _{t}}V(W(s))\,dL_{s}
  \right) 
  \right] , 
 \end{align*}
 which is rewritten, by using the above facts and the Markov property of 
 $W$, as 
 \begin{align}\label{;q2a2}
  \int _{0}^{t}\frac{dv}{\pi \sqrt{v(t-v)}}\,
  Q_{x}\Biggl[ 
  P_{v,0}&\left[ 
  \exp 
  \left( 
  \int _{0}^{v}V(W(s))\,dL_{s}
  \right) 
  \right] \notag \\
  &\ \times 
  \qp{W(v)}\left[ 
  u_{0}\left( W(t-v),\left| \mathbf{n}M_{t-v}\right| \right) 
  \right] 
  \Biggr] . 
 \end{align}
 Since 
 \begin{align*}
  P(M_{t-v}\in dy)=\sqrt{\frac{2\pi }{t-v}}\,
  yg_{1}(t-v,y)\,dy, \quad y>0, 
 \end{align*}
 we have in \eqref{;q2a2} 
 \begin{align*}
  &\qp{W(v)}\left[ 
  u_{0}\left( W(t-v),\left| \mathbf{n}M_{t-v}\right| \right) 
  \right] \\
  &=\sqrt{\frac{\pi }{2(t-v)}}
  \int _{\R ^{N-1}}dz\,g_{N-1}\left( t-v,z-W(v)\right) 
  \int _{-\infty }^{\infty }dy\,|y|g_{1}(t-v,y)u_{0}(z,|y|)\\
  &=\sqrt{\frac{\pi (t-v)}{2}}f_{0}\left( t-v,W(v)\right) 
 \end{align*}
 by the definition of $f_{0}$. Plugging this into 
 \eqref{;q2a2}, we obtain the claimed representation for $u(t,x)$. 
\end{proof}

Using \lref{;lrewr}, we prove \pref{;prelat}. To this end, we set 
$\btm _{t}=\be _{t}+mt,\,t\ge 0$, and recall the identity in law: 
\begin{align}\label{;relrel}
 \left( 
 \{ \Xm _{t}\} _{t\ge 0},\pr _{x}
 \right) 
 \stackrel{(d)}{=}
 \bigl( 
 \left\{ 
 W(\tau _{t}(\btm ))
 \right\} _{t\ge 0},\qp{x}
 \bigr) , 
\end{align}
which can easily be checked by similar calculation to 
\eqref{;cauchy}, upon using the Cameron-Martin relation; indeed, 
for every $t\ge 0$ and $\xi \in \R ^{N-1}$, 
\begin{align*}
 &\qp{x}\left[ 
 \exp \left\{ 
 i\xi \cdot \left( 
 W(\tau _{t}(\btm ))-x
 \right) 
 \right\} 
 \right] \\
 &=\qp{x}\left[ 
 \exp \left( 
 mt-\frac{1}{2}m^2\tau _{t}(\be )
 \right) \exp \left\{ 
 i\xi \cdot \left( 
 W(\tau _{t}(\be ))-x
 \right) 
 \right\} 
 \right] \\
 &=P\left[ 
 \exp \left\{ 
 mt-\frac{1}{2}\left( |\xi |^{2}+m^2\right) \tau _{t}(\be )
 \right\} 
 \right] \\
 &=\exp \left\{ 
 t\left( 
 m-\sqrt{|\xi |^{2}+m^{2}}
 \right) 
 \right\},  
\end{align*}
in agreement with \eqref{;chfrel}. 
We are in a position to prove \pref{;prelat}. 
\begin{proof}[Proof of \pref{;prelat}]
 By \eqref{;relrel}, we rewrite the $\pr _{x}$-expectation in the 
 right-hand side of \eqref{;lt} as 
 \begin{align*}
  &\qp{x}\left[ 
  f_{m}\left( W(\tau _{t}(\btm ))\right) 
  \exp 
  \left\{ 
  \int _{0}^{t}V\left( W(\tau _{s}(\btm ))\right) ds
  \right\} 
  \right] \\
  &=\qp{x}
  \left[ 
  \exp \left( 
  mt-\frac{1}{2}m^2\tau _{t}(\be )
  \right) 
  f_{m}\left( W(\tau _{t}(\be ))\right) 
  \exp 
  \left\{ 
  \int _{0}^{t}V\left( W(\tau _{s}(\be ))\right) ds
  \right\} 
  \right] , 
 \end{align*}
 where for the second line, we used the Cameron-Martin relation 
 under $P$. Hence by Fubini's theorem, the right-hand side of 
 \eqref{;lt} is equal to 
 \begin{align*}
  \qp{x}
  \left[ \int _{0}^{\infty }dt\,
  \exp \left( 
  -\frac{1}{2}m^2\tau _{t}(\be )
  \right) 
  f_{m}\left( W(\tau _{t}(\be ))\right) 
  \exp 
  \left\{ 
  \int _{0}^{t}V\left( W(\tau _{s}(\be ))\right) ds
  \right\} 
  \right] . 
 \end{align*}
 By changing variables with $t=\ovl{\be }_{v}$ and noting 
 \eqref{;tch}, the above expression is further rewritten as 
 \begin{align}
  &\qp{x}
  \left[ 
  \int _{0}^{\infty }d\ovl{\be }_{v}\,
  e^{-\frac{1}{2}m^2v}
  f_{m}(W(v))
  \exp 
  \left( 
  \int _{0}^{v}V(W(s))\,d\ovl{\be }_{s}
  \right) 
  \right] \notag \\
  &=\qp{x}
  \left[ 
  \int _{0}^{\infty }dL_{v}\,
  e^{-\frac{1}{2}m^2v}
  f_{m}(W(v))
  \exp 
  \left( 
  \int _{0}^{v}V(W(s))\,dL_{s}
  \right) 
  \right] \notag \\
  &=\int _{0}^{\infty }\mu _{L}(dv)\,e^{-\frac{1}{2}m^2v}
  \qp{x}_{v,0}
  \left[ 
  f_{m}(W(v))
  \exp 
  \left( 
  \int _{0}^{v}V(W(s))\,dL_{s}
  \right) 
  \right] , \label{;q1a2}
 \end{align}
 where the first equality is due to L\'evy's equivalence, and the 
 second follows from the definition of $\mu _{L}$ and the fact that $dL_{v}$ is 
 carried by the set $\{ v\ge 0;\,\be _{v}=0\} $; for the validity of the latter 
 computation,  refer to Exercise~\thetag{2.29} in \cite[Chapter~VI]{rey} 
 (closely related is 
 the theory of Brownian excursions, see Chapter~XII of the same reference). 
 By the definition of $f_{m}$, 
 we may write 
 \begin{align*}
  f_{m}(W(v))=\int _{v}^{\infty }dt\,
  e^{-\frac{1}{2}m^2(t-v)}f_{0}(t-v,W(v)). 
 \end{align*}
 Inserting this expression into \eqref{;q1a2} and using Fubini's theorem, 
 we see that \eqref{;q1a2} is equal to 
 \begin{align*}
  \int _{0}^{\infty }dt\,
  e^{-\frac{1}{2}m^2t}
  \int _{0}^{t}\mu _{L}(dv)\,
  Q_{x}\otimes P_{v,0}\left[ 
  f_{0}(t-v,W(v))\exp 
  \left( 
  \int _{0}^{v}V(W(s))\,dL_{s}
  \right) 
  \right] , 
 \end{align*}
 which agrees with $u_{m}(x)$ by \lref{;lrewr}. This ends the proof of 
 the proposition. 
\end{proof}

\begin{rem} 
\thetag{1}~A point of the above computation is the nonnegativity of $u_{0}$, 
which allows us to use Fubini's theorem without taking the integrability 
into account, and hence we may take $u_{0}\equiv 1$ to obtain 
for all $x \in \R ^{N-1}$, 
\begin{align*}
 \frac{m^2}{2}\int _{0}^{\infty }dt\,e^{-\frac{1}{2}m^2t}
 \E _{(x,0)}&\left[ 
 \exp 
 \left\{ 
 \int _{0}^{t}V(\bd _{s})\,d\ln _{s}
 \right\} 
 \right] \\
 &=m
 \int _{0}^\infty dt\,e^{-mt}
 E_{x}\left[ 
 \exp \left\{ 
 \int _{0}^{t}V(\Xm _{s})\,ds
 \right\} 
 \right] . 
\end{align*}

 \noindent 
 \thetag{2}~If we take $x=(x',x_{N})$ with $x_{N}>0$ in \eqref{;fktr}, 
 then its Laplace transform admits the following representation: 
 \begin{align}
 &\int _{0}^{\infty }dt\,e^{-\frac{1}{2}m^2t}
  \E _{x}\left[ 
 u_{0}(\bd _{t},|\bn _{t}|)\exp 
 \left\{ 
 \int _{0}^{t}V(\bd _{s})\,d\ln _{s}
 \right\} 
 \right] \notag \\
 &=m^{N}\int_{\R ^{N-1}}dz\,
  \biggl\{ 
  x_{N}\Phi _{N}\Bigl(  
  m\sqrt{|z-x'|^2+x_{N}^2}
  \Bigr) u_{m}(z) \notag \\
  &\qquad \qquad \qquad \qquad +\int _{0}^{\infty }dr\,u_{0}(z,r)
  \int _{|r-x_{N}|}^{r+x_{N}}
  d\eta \,\eta \Phi _{N}\left( 
  m\sqrt{|z-x'|^2+\eta ^2}
  \right)  
  \biggr\} , \label{;ltgen}
 \end{align}
 where we set 
 \begin{align*}
  \Phi _{N}(y)=\frac{2}{\sqrt{(2\pi y)^{N}}}K_{\frac{N}{2}}(y), \quad y>0, 
 \end{align*}
 with $K_{\frac{N}{2}}$ the modified Bessel function of the third kind 
 of index $N/2$, and $u_{m}$ is defined as above and 
 expressed as \eqref{;lt}. 
 The representation \eqref{;ltgen} is seen by decomposing 
 \eqref{;fktr} into the sum 
 \begin{align}\label{;decompA}
 \E _{x}\biggl[ 
 u_{0}(\bd _{t},|\bn _{t}|)\exp 
 \biggl\{ 
 \int _{\sigma ^{N}_{0}}^{t}V(\bd _{s})\,d\ln _{s}
 \biggr\} ;\,\sigma ^{N}_{0}\le t
 \biggr] +
 \E _{x}\left[ 
 u_{0}(\bd _{t},|\bn _{t}|);\,\sigma ^{N}_{0}>t
 \right] , 
\end{align}
where $\sigma ^{N}_{0}$ is the first hitting time of $\bn $ to the origin. 
By conditioning on $\sigma ^{N}_{0}$ and using the (strong) Markov property 
of Brownian motion, we may see that the first term of \eqref{;decompA} 
is rewritten as 
\begin{align*}
 \int _{0}^{t}dv\,
 \frac{x_{N}}{\sqrt{2\pi v^{3}}}
 \exp \left( -\frac{x_{N}^2}{2v}\right) 
 \int _{\R ^{N-1}}\frac{dz}{\sqrt{(2\pi v)^{N-1}}}
 \exp \left\{ 
 -\frac{|z-x'|^{2}}{2v}
 \right\} u(t-v,z)
\end{align*}
with $u$ the function defined by \eqref{;fkd}. We use the explicit 
representation of the transition density of one-dimensional Brownian motion 
absorbed at the origin (see, e.g., \cite[Problem~2.8.6]{ks}) to 
rewrite the second term of \eqref{;decompA} as 
\begin{align*}
 \int _{\R ^{N-1}}dz\,\int _{0}^{\infty }dr\,
 u_{0}(z,r)\int _{|r-x_{N}|}^{r+x_{N}}
 \frac{d\eta }{\sqrt{(2\pi t)^{N}}}\frac{\eta }{t}
 \exp \left( -\frac{|z-x'|^{2}+\eta ^2}{2t}\right) . 
\end{align*}
Combining these expressions and noting the relation that 
\begin{align*}
 \int _{0}^{\infty }dt\,t^{-\frac{N}{2}-1}
 \exp \left( -\frac{1}{2}m^2t-\frac{a^2}{2t}\right) 
 =2\left( \frac{m}{a}\right) ^{\frac{N}{2}}K_{\frac{N}{2}}(am)
\end{align*}
for any $a>0$ (cf.\ \cite[Equation~\thetag{5.10.25}]{leb}), 
we obtain \eqref{;ltgen}. 
\end{rem}
\medskip 

\noindent 
{\bf Acknowledgements.} The authors would like to thank an anonymous 
referee for carefully reading the manuscript and providing them 
with valuable comments.


\end{document}